\newcommand*{\dosubeqns}{}
  \@ifdefinable \subeqnstoks {\newtoks \subeqnstoks}
  \DeclareRobustCommand*{\settheparentequation}{%
    \def \theparentequation
  }
  \newenvironment{subeqns*}[1][gather]{%
    \subequations
    \the \subeqnstoks
    \renewcommand*{\dosubeqns}[1]{\begin{#1}##1\end{#1}}%
    \collect@body \dosubeqns
  }{%
    \protected@edef \@tempa {%
      \global \subeqnstoks {%
        \protect \setcounter{parentequation}{\number\value{parentequation}}%
        \settheparentequation {\theparentequation}%
        \protect \setcounter{equation}{\number\value{equation}}%
      }%
    }%
    \@tempa
    \endsubequations
  }
\newtheorem{theorem}{Th\'eor\`eme}[section]
\newtheorem{proposition}[theorem]{Proposition}
\newtheorem{lemma}[theorem]{Lemme}
\newtheorem{remark}[theorem]{Remarque}
\newtheorem{example}[theorem]{Exemple}
\newtheorem{definition}[theorem]{D\'efinition}
\newtheorem{corollary}[theorem]{Corollaire}
\newcommand{\N}{\mathbb{N}}
\newcommand{\Z}{\mathbb{Z}}
\newcommand{\R}{\mathbb{R}}
\newcommand{\Dc}{\mathcal{D}}
\def \ds{\displaystyle}
\def \rmi{{\rm i}}
\def \M{\Sigma}
\def \A{\mathcal{A}}
\def \Dc{\text{D}_c}
\def \Dc1{\text{D}_d}
\def \Dc2{\mathcal{D}}
\title{Sur les occurrences des mots dans les nombres premiers}
\author{Gautier Hanna}
\address{1. Universit\'e de Lorraine, Institut Elie Cartan de Lorraine, UMR 7502, Vandoeuvre-l\`es-Nancy, F-54506, France;
2. CNRS, Institut Elie Cartan de Lorraine, UMR 7502, Vandoeuvre-l\`es-Nancy, F-54506, France}
\email{gautier.hanna@univ-lorraine.fr}
\keywords{nombres premiers, sommes d'exponentielles, chiffres}
\subjclass[2010]{Primary 11A63; Secondary 11B85, 11N05, 11L20}
\begin{document}

\maketitle

\begin{abstract}
In this paper, we generalize Mauduit and Rivat's theorem on the Rudin-Shapiro sequence. Weakening the hypothesis needed in their theorem, we prove a prime number theorem for a large class of functions defined on the digits. Our result covers the case of generalized Rudin-Shapiro sequences as well as bloc-additive sequences on finite and infinite expansions. We also give a partial answer to a question posed by Kalai. 
\end{abstract}

\section{Introduction}

Mauduit et Rivat ont démontré dans \cite{MR:gelfond} une conjecture vieille de 40 ans due à A.Gelfond \cite{gelfond}. Elle stipulait en particulier que la moitié des nombres premiers avaient un nombre de $1$ pair dans l'écriture en base $2$. Cette question est reliée à l'étude des fonctions définies sur les chiffres en base $q$ (ici la fonction $s_2(n)$, la somme des chiffres en base $2$) et des sous-suites de suites automatiques \cite{AlloucheShallit} (la suite de Thue-Morse). La recherche d'un théorème des nombres premiers pour des fonctions définies sur les chiffres, voire l'étude des sous-suites de suites automatiques, est un problème ardu. La méthode développée dans \cite{MR:gelfond} a permis ces dernières années des progrès significatifs dans ce domaine \cite{MRD:gelfond3, ThueMorsenormal, digitales, RudinShapiro}.
 
Parall\`element, Kalai~\cite{Kalai1} s'est int\'eress\'e au probl\`eme suivant. \'Etant donn\'e $S$ un sous-ensemble de $\{1,\ldots,n\} $, $\mu(n)$ la fonction de M\"obius, et $\Omega_n$ l'espace de tous les $x=(x_1,\ldots,x_n)$ de $\{0,1\}^n$, a-t-on pour tout $A >0$,  \begin{equation*}\label{Kalai}
\hat{\mu} (S) := \ds\frac{1}{2^n}\ds\sum_{x \in \Omega_n}\mu (x_1 + 2x_2+\ldots +2^{n-1}x_n)(-1)^{\sum_{i \in S}x_i} = O(n^{-A})\; ?
\end{equation*} Bourgain~\cite{bourgain} a répondu positivement,  montrant notamment un principe d'al\'ea de M\"obius pour toutes les fonctions lin\'eaires sur $\Z / 2\Z$. Suite \`a ce travail, Kalai a demand\'e dans~\cite{Kalai2} d'\'etudier le cas des polyn\^omes de plus haut degr\'e, notamment le cas de la suite de Rudin-Shapiro~\cite{Rudin, Shapiro}. \'Etudier la suite de Rudin-Shapiro est naturel puisqu'il s'agit du cas le plus simple de polyn\^ome de degr\'e plus grand que $1$.  Si \begin{equation}\label{decompn}
	n = \ds\sum_{i \geq 0}\epsilon_i(n)q^i
\end{equation} est l'écriture de $n$ en base $q$, en utilisant~(\ref{decompn}) avec $q=2$, on pose $$a(n) = \ds\sum_{i \geq 0}\epsilon_i(n) \epsilon_{i+1}(n) $$ 
alors $(a(n) \bmod 2)_{n\geq 0}$ d\'esigne la suite de Rudin--Shapiro. Tao~\cite{Kalai2} a donn\'e une preuve d'un principe d'al\'ea de M\"obius dans ce cas particulier, et Mauduit et Rivat~\cite{RudinShapiro} ont donn\'e une formule asymptotique avec un terme d'erreur explicite et ont \'egalement obtenu un th\'eor\`eme des nombres premiers dans ce cas. Ils ont formul\'e deux conditions suffisantes sur une suite $(f(n))_{n \in \N}$ de module $1$ pour estimer de manière non triviale la somme $\sum_{n < N}\Lambda(n)f(n) $, o\`u $\Lambda$ d\'esigne la fonction de von Mangoldt. Pour notre part, nous allons altérer légèrement une de ces conditions.

Notons $\mathbb{U}$ le cercle unit\'e, $f^{(\lambda)}$ une troncation de la fonction $f$ (nous donnerons la d\'efinition pr\'ecise dans la partie~\ref{Section tous les resultats}) et  $e(x)= \exp(2\rmi \pi x)$.

\begin{definition}[Faible propri\'et\'e de petite propagation]\label{MR def 1}
On dit qu'une application $f : \mathbb{N} \rightarrow \mathbb{U}$ a la faible propri\'et\'e de petite propagation si, uniform\'ement pour $(\lambda, \kappa, \rho) \in \mathbb{N}^3$ avec $\rho < \lambda$, le nombre d'entiers $l$ satisfaisant  $0 \leq l < q^{\lambda}$ tels qu'il existe $(k_1,k_2) \in \{0,\ldots ,q^{\kappa}-1\}^2$ avec  \begin{equation}\label{petite propagation}f(lq^{\kappa}+k_1+k_2)\overline{f(lq^{\kappa}+k_1)} \neq f^{(\kappa + \rho)}(lq^{\kappa}+k_1+k_2)\overline{f^{(\kappa + \rho)}(lq^{\kappa}+k_1)}
\end{equation}  est $O\left(q^{\lambda-\rho + \log \rho}\right)$, la constante ne d\'ependant que de $q$ et $f$.
\end{definition}

\begin{definition}[Propri\'et\'e de Fourier]\label{propriete fourier mauduit rivat}
On dit qu'une application $f : \mathbb{N} \rightarrow \mathbb{U}$ a la \textit{propri\'et\'e de Fourier} s'il existe une fonction $\gamma$ croissante, avec $ \lim_{\lambda \rightarrow + \infty} \gamma(\lambda) =  + \infty$ et une constante absolue $c >0$ tels que pour tous entiers positifs $\lambda , \kappa $, avec $\kappa \leq c \lambda $ et tout r\'eel $t$, on ait :
\begin{equation*}\label{equation Mauduit Rivat}
\left|\frac{1}{q^\lambda}\ds\sum_{0 \leq n < q^\lambda}f(q^{\kappa}n)e(-nt)\right| \leq q^{- \gamma(\lambda)}.
\end{equation*}
\end{definition}

Nous avons alors le 

\begin{theorem}\label{notre gros theoreme}
Soit $f$ une application  v\'erifiant les d\'efinitions \ref{MR def 1} et \ref{propriete fourier mauduit rivat}. Alors $f$ vérifie uniform\'ement en $\vartheta \in \R $:
\begin{equation}\label{nouvelle equation von mangoldt}
\left| \ds\sum_{n \leq x}\Lambda (n)f(n)e(\vartheta n)\right| \ll c_1(q) (\log x)^{c_2(q)}xq^{-\gamma(2\lfloor (\log x)/80 \log q \rfloor)/20+ \log (\gamma(2\lfloor (\log x)/80 \log q \rfloor)/20)},
\end{equation} 
avec 
\begin{align*}
  c_1(q) &= \max (\tau(q)\log q,\log^{10}q)^{1/4}(\log q)^{2-2c_2(q)},\\
	c_2(q) &= 4+\frac{\log q}{4}+ \frac{1}{4}\max (\omega (q), 2).
\end{align*}
\end{theorem}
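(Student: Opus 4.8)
Le plan est de suivre la méthode développée par Mauduit et Rivat dans~\cite{MR:gelfond, RudinShapiro}, en substituant à leur hypothèse de propagation la faible propriété de petite propagation de la Définition~\ref{MR def 1}. Le facteur $e(\vartheta n)$ étant une phase linéaire, il sera absorbé sans difficulté: dans les sommes de type~I par la propriété de Fourier, qui comporte déjà une somme contre $e(-nt)$, et dans les sommes de type~II il se simplifie dans les différences $f(a)\overline{f(b)}e(\vartheta(a-b))$ lorsque $a-b$ est fixé. Il suffit donc essentiellement de majorer $\sum_{n\leq x}\Lambda(n)f(n)$ uniformément. D'abord, j'appliquerais une identité combinatoire de type Vaughan afin de décomposer cette somme en une combinaison de sommes bilinéaires de type~I,
\[ S_I = \sum_{m \leq M} a_m \sum_{\ell} f(m\ell), \]
où les coefficients $a_m$ sont lisses, et de sommes bilinéaires de type~II,
\[ S_{II} = \sum_{M < m \leq M'}\sum_{\ell} a_m b_\ell\, f(m\ell), \]
où $a_m$ et $b_\ell$ sont des suites bornées arbitraires.

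Ensuite, je traiterais les sommes de type~I à l'aide de la propriété de Fourier. Après découpage de la variable intérieure en progressions arithmétiques de raison $q^\kappa$ et remplacement de $f$ par une troncation $f^{(\lambda)}$ à ses $\lambda$ premiers chiffres, la somme interne se ramène à des expressions de la forme $\sum_n f(q^\kappa n)e(-nt)$, que la Définition~\ref{propriete fourier mauduit rivat} majore par $q^{-\gamma(\lambda)}$ dès que $\kappa \leq c\lambda$. Le coût du passage à la troncation est contrôlé, ici comme dans le type~II, par la faible propriété de petite propagation.

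Le cœur technique, et l'obstacle principal, sera le traitement des sommes de type~II. J'appliquerais d'abord l'inégalité de Cauchy--Schwarz pour éliminer les coefficients extérieurs $a_m$, puis une inégalité de van der Corput introduisant un décalage $r$, de manière à faire apparaître des produits de la forme
\[ \sum_{\ell}\sum_m f\bigl(m(\ell+r)\bigr)\overline{f(m\ell)}. \]
C'est à ce stade qu'intervient de façon cruciale la Définition~\ref{MR def 1}: en écrivant la variable sous la forme $lq^\kappa + k_1$ et la contribution du décalage comme un $k_2 < q^\kappa$, elle autorise le remplacement des produits $f(\cdot)\overline{f(\cdot)}$ par leurs versions tronquées $f^{(\kappa+\rho)}$ au prix d'un terme d'erreur $O(q^{\lambda-\rho+\log\rho})$ provenant des retenues. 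Une fois tronquée, la fonction ne dépend plus que d'un nombre fini de chiffres et se factorise, ce qui ramène de nouveau à des sommes d'exponentielles que la propriété de Fourier permet de majorer.

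Il resterait enfin à optimiser les paramètres. Je choisirais $\lambda = 2\lfloor(\log x)/(80\log q)\rfloor$, de sorte que $q^\lambda$ soit de taille environ $x^{1/40}$, puis j'équilibrerais en $\rho$ et $\kappa$ les pertes dues aux retenues contre le gain $q^{-\gamma(\lambda)}$ fourni par la propriété de Fourier. Les applications successives de Cauchy--Schwarz et de van der Corput divisent l'exposant, ce qui explique le facteur $1/20$ et produit l'exposant final $-\gamma(\lambda)/20+\log(\gamma(\lambda)/20)$; le suivi explicite des constantes à travers ces étapes fournit les quantités $c_1(q)$ et $c_2(q)$. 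La difficulté résidera dans le contrôle simultané de la propagation des retenues, via la Définition~\ref{MR def 1}, et de la compatibilité de la troncation avec la structure multiplicative $m\ell$, le tout en conservant une dépendance explicite et optimisée en les constantes.
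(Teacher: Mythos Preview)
Your sketch correctly identifies the paper's approach: a Vaughan decomposition, Type~I sums handled via truncation plus the Fourier property, Type~II sums handled via Cauchy--Schwarz and van der Corput with the faible propri\'et\'e de petite propagation controlling the truncation errors, and a final optimisation of the parameters. The paper follows exactly this route (reprising \cite{RudinShapiro} and tracking the extra $q^{\log\rho}$ at each truncation), though it is worth noting that the Type~II analysis is rather more layered than your outline suggests --- a double truncation $f^{(\mu_1,\mu_2)}$, the digit-window operator $r_{\mu_0,\mu_2}$, Vaaler polynomials, and a modified $L^2$ Fourier-coefficient lemma (the analogue of~\cite[Lemma~11]{RudinShapiro}) are all needed before the Fourier property can be applied.
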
 
Ces énoncés diffèrent de \cite{RudinShapiro} par une altération dans définition \ref{MR def 1} de $O(q^{\lambda - \rho})$ en $O(q^{\lambda - \rho + \log \rho})$ et par l'altération des constantes $c_1(q)$ et $c_2(q)$. 

\begin{remark}\label{Remarque gros thm}
La démonstration du Théorème \ref{notre gros theoreme} repose sur l'estimation de sommes de type I et de type II et en l'application du Lemme $1$ de~\cite{MR:gelfond}. En particulier les techniques utilisées permettent d'avoir  l'estimation \eqref{nouvelle equation von mangoldt} avec $\mu$ la fonction de Möbius à la place de $\Lambda$ en utilisant l'équation $13.40$ de \cite{Iwaniec}.
\end{remark}

Dans \cite{RudinShapiro}, Mauduit et Rivat utilisent leur théorème avec $f(n) = e(\alpha a(n)) $ et $(a(n))_{n \in \N}$ une suite de Rudin--Shapiro g\'en\'eralis\'ee, ils traitent les deux cas suivants : 
\begin{equation}\label{gen1}
  \beta_\delta (n) = \ds\sum_{l \geq 0}\epsilon_l(n)\epsilon_{l +\delta}(n)
\end{equation}
et
\begin{equation}\label{gen2}
  b_d(n) = \ds\sum_{l \geq 0}\epsilon_l(n)\epsilon_{l+1}(n)\cdots \epsilon_{l+d}(n).
\end{equation}	
Notons que la suite $\beta_\delta (n)$ a \'et\'e introduite par Allouche et Liardet dans~\cite{AlloucheLiardet}. Le Théorème~\ref{notre gros theoreme} implique que ces suites sont uniformément distribuées et possèdent un théorème des nombres premiers (pour les énoncés exacts, voir les corollaires \ref{coro1}, \ref{coro2} et \ref{coro3}).

\bigskip
Dans cet article nous g\'en\'eralisons les r\'esultats de~\cite{RudinShapiro} \`a $$a(n) = a(\lfloor n/q^{T_q(n)-\beta} \rfloor)+ \ds\sum_{0 \leq l \leq T_q(n)-\beta}h(\epsilon_l(n),\epsilon_{l+1}(n),\ldots, \epsilon_{l+\beta-1}(n)),$$
o\`u $h$ est une fonction à $\beta$ variables o\`u $\beta$ est un entier $\geq 2$ et $T_q(n) = \lfloor \log n / \log q \rfloor$ est la taille de $n$ en base $q$. La forme de ces suites, que nous nommons $\beta$-r\'ecursives, g\'en\'eralise~(\ref{gen1}) et~(\ref{gen2}), mais \'egalement le cas des suites digitales,  parfois nomm\'ees bloc additives, qu'on peut trouver dans~\cite{Cateland}. La recherche d'un principe d'al\'ea de M\"obius pour les suites bloc additives a \'et\'e traitée par M\"ullner~\cite{Muellner}. Du fait de leur forme, les suites $\beta$-r\'ecursives permettent de mieux r\'epondre que les fonctions digitales \`a la question de Kalai (Th\'eor\`eme \ref{Thm Kalai}). 
 
\bigskip

Le lecteur trouvera dans la partie~\ref{section definition} une introduction aux suites $\beta$-r\'ecursives et aux diff\'erentes notations qui seront utilis\'ees dans l'article. Dans la partie \ref{Section tous les resultats} nous développons plus précisément les conséquences du théorème principal (Th\'eor\`eme \ref{notre gros theoreme}). La condition de faible propagation obtenue, et l'explication de l'alt\'eration de la condition initiale sont situ\'ees dans la partie~\ref{section petite propa} de ce travail. Comme nous altérons les définitions de \cite{RudinShapiro}, nous sommes obligés de reprendre partiellement cet article. C'est ce qui est fait dans la partie~\ref{Section TP}. Pour terminer l'\'etude des suites $\beta$-r\'ecursives, la condition de Fourier est v\'erifi\'ee dans la partie~\ref{section genealogie}. Pour se faire, nous sommes amenés \`a contr\^oler la norme infinie d'une matrice reliée à la suite $\beta$ récursive. Nous donnons la formule exacte de la norme infinie de cette matrice (Proposition \ref{prop equa a prouver thm principal}) en exhibant un graphe. La partie~\ref{Section : applis} est d\'evou\'ee \`a la collecte de r\'esultats.

\section{Notations et d\'efinitions}\label{section definition}

Soit $q $ un entier sup\'erieur ou \'egal \`a $2$. On note $\mathcal{A}$ l'alphabet $\mathcal{A} := \{0,\ldots , q-1\}$. On note $\M$ l'ensemble des mots sur $\A$, $\M^*$ l'ensemble des mots finis, $\M_k$ l'ensemble des mots  de taille $k$, $\M_k^*$ l'ensemble des mots de taille au plus $k$, et $\epsilon$ le mot de taille $0$. Ainsi $\M_0 = \{ \epsilon \}$, $\M_1 = \{0,\ldots , q-1\}$, $\M_1^* = \{ \epsilon , 0, \ldots , q-1 \}$, etc. Soient $\omega , \omega' \in \M $. On note $ \omega\cdot\omega'$ leur concat\'enation et $| \omega |$ la taille de $\omega$ (on omettra parfois le symbole $\cdot$, toutefois sans risque de confusion). Pour un entier $k \geq 0 $, on note $\overline{\omega}^k$, le pr\'efixe de $\omega$ de taille $k$, et $\underline{\omega}_k$ son suffixe de taille $k$. On a par convention $\overline{\omega}^0 = \underline{\omega}_0 = \epsilon$. Ainsi, pour tout entier $k$ entre $0$ et $|\omega |$, on a la décomposition $\omega = \overline{\omega}^{|\omega|-k}\cdot\underline{\omega}_k$. On note $\epsilon_i(\omega)$ la $i$-i\`eme lettre de $\omega$, lu de droite \`a gauche, donc $\omega = \epsilon_{|\omega|-1}(\omega)\cdots\epsilon_0(\omega) $.

On d\'efinit l'application $\varphi : \M^* \rightarrow \mathbb{N}$ par $\varphi(\omega) = \ds\sum_{i = 0}^{|\omega|-1}\epsilon_i(\omega)q^i$. Pour $r \in \A $, on utilisera la notation $\hat{r}$ pour l'entier $\varphi (r) $. Pour un entier $x$ compris entre $0$ et $q-1$, on note $\dot{x} = \varphi^{-1}(x) $ pour d\'esigner la lettre correspondante. Par exemple, pour $\omega = 280163$, on a $|\omega| = 6, \, \overline{\omega}^2 = 28, \, \underline{\omega}_3 = 163$, et pour la base $q=11$,
$$\varphi(\omega)= 2*11^5 +8*11^4 + 0*11^3 + 1*11^2 + 6*11^1 + 3*11^0 = 439420.$$
Enfin, soit $\omega' \in \M^*$, nous notons $\mathds{1}_{\omega'}: \M^* \rightarrow \{0,1\}$ avec
$$\mathds{1}_{\omega'}(\omega)
 = \begin{cases} 1 &\mbox{si } \omega= \omega'; \\
0 & \mbox{sinon. }\end{cases}
$$

Nous introduisons maintenant l'objet central de l'\'etude de cet article.

\begin{definition}\label{def beta rec}
Soient $(a(n))_{n \in \N}$ une suite \`a valeurs dans $\Z$ et  $\beta$ un entier sup\'erieur ou \'egal \`a  $2$. On dit que $(a(n))_{n \in \N}$ est $\beta$-r\'ecursive s'il existe  une application $g : \M_\beta \rightarrow \mathbb{N}$ telle que pour tout $n \geq 1$ et pour tout $\omega$ dans $\M_\beta$, on ait : \begin{equation}\label{condi 1 beta rec}
a(q^{\beta}n + \varphi(\omega)) = a(q^{\beta - 1}n + \varphi(\overline{\omega}^{|\omega|-1}))+g(\omega),
\end{equation} et telle que si  $\overline{\omega}^1 \neq 0$ : \begin{equation}
\label{condi 2 beta rec}
a(\varphi(\omega)) = a(\varphi(\overline{\omega}^{|\omega|-1}))+g(\omega).
\end{equation} 

Nous dirons que $g$ est la fonction de propagation de $a$.
\end{definition}

Comme $\omega$ est un élément de $\M_{\beta}$, si $\overline{\omega}^1 = 0$  il existe $\tilde{\omega}$ dans $\M_{\beta-1}^*$ tel que  $$\varphi(\omega) = \varphi(\tilde{\omega}) = \ds\sum_{i = 0}^{\beta - 2}\epsilon_i(\omega)q^i < q^{\beta - 1}.$$  Ainsi la notion de $\beta$-r\'ecursivit\'e n'impose de contrainte que pour les entiers au moins \'egaux \`a $q^{\beta-1} $. 

Citons ici trois exemples de suites $\beta$-récursives :

\begin{enumerate}[label=\Alph*]
\item[(E1)]\label{Exemple 1} \textbf{Suites de Rudin--Shapiro g\'en\'eralis\'ees.} Les suites de type Rudin-Shapiro, constituées des généralisations proposées par M. Queffélec~\cite{Queffelec}, par Grant, Shallit et Stoll~\cite{GSS}, ou encore Allouche et Liardet~\cite{AlloucheLiardet} sont des suites $\beta$-récursives. 
\item[(E2)] \textbf{Suites bloc additives.} Les suites digitales, parfois nommées blocs additives~\cite{AlloucheShallit, Cateland} définies par : $a(n) = \ds\sum_{i \geq 0}g(\epsilon_{i+\beta-1}(n)\cdot\ldots \cdot\epsilon_i(n)) $ avec $g(0\cdot \ldots \cdot 0) = 0$ et \eqref{decompn} sont des suites $\beta$-récursives.
\item[(E3)] \textbf{Suites bloc additives finies.} On peut se passer de la condition $g(0\cdot\ldots \cdot 0) = 0$ et prendre la suite $a(n) = \ds\sum_{i = 0}^{T_q(n)-r}g(\epsilon_{i+\beta-1}(n),\ldots,\epsilon_i(n))$, ce qui est fondamental si on veut compter les blocs de chiffres en écriture finie (par exemple la suite bloc-additive qui compte le nombre de $01$ vaudra $2$ pour $101$, ce qui est contraire à l'intuition). Cette suite est également une suite $\beta$-récursive. 
\end{enumerate}

L'objet de la proposition suivante est de faire le lien entre la définition \ref{def beta rec} et les trois exemples précités :

\begin{proposition}\label{Decomposition pour une suite beta recursive}
Soient un entier $\beta \geq 2$ et $(a(n))_{n \in \N}$ une suite $\beta$-r\'ecursive. Soit $n$ un entier, nous consid\'erons sa d\'ecomposition en base $q$, $$n = \ds\sum_{i = 0}^N \epsilon_i(n)q^i = \varphi\Big(\epsilon_N(n)\cdot \ldots \cdot\epsilon_1(n)\cdot \epsilon_0(n)\Big),$$ o\`u $N = T_q(n)$. Alors, si $n \geq q^{\beta - 1} $, on a $N \geq \beta - 1 $ et $$a(n) = a\bigg(\varphi\Big(\epsilon_N(n)\cdot\epsilon_{N-1}(n)\cdot\ldots\cdot\epsilon_{N-\beta+2}(n)\Big) \bigg)+\ds\sum_{l=0}^{N-\beta+1}g\Big(\epsilon_{l+\beta-1}(n)\cdot\ldots\cdot\epsilon_{l+1}(n)\cdot\epsilon_l(n)\Big). $$ 
Remarquons que $|\epsilon_N(n)\cdot\epsilon_{N-1}(n)\cdot\ldots\cdot\epsilon_{N-\beta+2}(n)| = \beta - 1$ et que $\epsilon_N(n) \neq 0$.
\end{proposition}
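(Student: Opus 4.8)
Le plan est de d\'erouler la r\'ecurrence \eqref{condi 1 beta rec} chiffre par chiffre, en effa\c{c}ant \`a chaque \'etape le chiffre de plus bas poids, le cas terminal \'etant fourni par \eqref{condi 2 beta rec}. Je commencerais par les deux remarques finales : puisque $n \geq q^{\beta-1}$, on a $\log n \geq (\beta-1)\log q$, d'o\`u $N = T_q(n) = \lfloor \log n / \log q \rfloor \geq \beta - 1$; et $\epsilon_N(n) \neq 0$ par d\'efinition de $N$ comme position du chiffre dominant. J'introduirais alors, pour $0 \leq l \leq N-\beta+1$, la fen\^etre $\omega^{(l)} := \epsilon_{l+\beta-1}(n)\cdots\epsilon_l(n) \in \M_\beta$ et, pour $0 \leq j \leq N$, l'entier tronqu\'e $m_j := \lfloor n/q^j \rfloor$, dont les chiffres sont $\epsilon_N(n)\cdots\epsilon_j(n)$.

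L'observation cl\'e serait que, pour $0 \leq j \leq N-\beta$, les $\beta$ derniers chiffres de $m_j$ forment exactement $\omega^{(j)}$ tandis que sa partie haute $\lfloor m_j/q^{\beta} \rfloor = m_{j+\beta}$ est $\geq 1$ (car $j+\beta \leq N$ et $\epsilon_N(n) \neq 0$), soit $m_j = q^{\beta} m_{j+\beta} + \varphi(\omega^{(j)})$ avec $m_{j+\beta} \geq 1$. En appliquant \eqref{condi 1 beta rec} et en utilisant l'identit\'e (v\'erification \'el\'ementaire sur les chiffres) $q^{\beta-1}m_{j+\beta} + \varphi(\overline{\omega^{(j)}}^{\,\beta-1}) = \lfloor m_j/q \rfloor = m_{j+1}$, on obtiendrait
\begin{equation*}
a(m_j) = a(m_{j+1}) + g(\omega^{(j)}), \qquad 0 \leq j \leq N-\beta,
\end{equation*}
puis, par t\'elescopage de ces \'egalit\'es,
\begin{equation*}
a(n) = a(m_0) = a(m_{N-\beta+1}) + \sum_{l=0}^{N-\beta} g(\omega^{(l)}).
\end{equation*}

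Il resterait \`a traiter le terme $a(m_{N-\beta+1})$. L'entier $m_{N-\beta+1}$ a pour chiffres $\epsilon_N(n)\cdots\epsilon_{N-\beta+1}(n)$, c'est-\`a-dire $m_{N-\beta+1} = \varphi(\omega^{(N-\beta+1)})$ avec $\omega^{(N-\beta+1)} \in \M_\beta$ de chiffre dominant $\overline{\omega^{(N-\beta+1)}}^{\,1} = \epsilon_N(n) \neq 0$; la clause \eqref{condi 2 beta rec} s'appliquerait donc et donnerait, puisque $\overline{\omega^{(N-\beta+1)}}^{\,\beta-1} = \epsilon_N(n)\cdots\epsilon_{N-\beta+2}(n)$,
\begin{equation*}
a(m_{N-\beta+1}) = a\big(\varphi(\epsilon_N(n)\cdots\epsilon_{N-\beta+2}(n))\big) + g(\omega^{(N-\beta+1)}).
\end{equation*}
En reportant dans l'\'egalit\'e pr\'ec\'edente on retrouverait la formule annonc\'ee, le cas $N = \beta-1$ (somme de t\'elescopage vide) se r\'eduisant exactement \`a \eqref{condi 2 beta rec}. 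On pourrait aussi r\'ediger cela comme une r\'ecurrence sur $N \geq \beta-1$, le pas consistant \`a appliquer \eqref{condi 1 beta rec} \`a $n$ puis l'hypoth\`ese de r\'ecurrence \`a $\lfloor n/q \rfloor$.

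La principale difficult\'e n'est pas analytique mais combinatoire : elle tient \`a la gestion rigoureuse des indices de chiffres sous la troncature $m_j \mapsto m_{j+1} = \lfloor m_j/q \rfloor$, et surtout \`a la distinction entre l'emploi de \eqref{condi 1 beta rec} et celui de \eqref{condi 2 beta rec}. Il faut v\'erifier que la partie haute $m_{j+\beta}$ demeure $\geq 1$ tant que $j \leq N-\beta$ — ce qui l\'egitime \eqref{condi 1 beta rec} — et qu'au rang terminal $j = N-\beta+1$ c'est bien la clause \eqref{condi 2 beta rec}, conditionn\'ee par $\overline{\omega}^1 \neq 0$, qui prend le relais, condition garantie par $\epsilon_N(n) \neq 0$.
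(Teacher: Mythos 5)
Votre d\'emonstration est correcte et suit essentiellement la m\^eme d\'emarche que celle de l'article : le t\'elescopage des \'egalit\'es $a(m_j)=a(m_{j+1})+g(\omega^{(j)})$ pour $0\leq j\leq N-\beta$ est exactement la r\'ecurrence (\ref{HR1}) de l'article \'ecrite avec la notation $m_j=\lfloor n/q^j\rfloor$, et le traitement du rang terminal via (\ref{condi 2 beta rec}) gr\^ace \`a $\epsilon_N(n)\neq 0$ est identique. Les points de vigilance que vous identifiez (positivit\'e de $m_{j+\beta}$ l\'egitimant (\ref{condi 1 beta rec}), cas $N=\beta-1$ trait\'e \`a part) sont pr\'ecis\'ement ceux de la preuve originale.
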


\begin{proof}
Le cas $N=\beta-1$ est immédiat, nous pouvons désormais supposer $N > \beta - 1$. Nous allons montrer par r\'ecurrence sur $r$ que pour tout entier $r $ compris entre $0$ et $N-\beta $, 

\begin{align}\label{HR1}
a\left(\ds\sum_{i = 0}^N \epsilon_i(n)q^i\right) &= a\bigg(q^{\beta - 1}\ds\sum_{i = \beta + r}^{N}\epsilon_i(n)q^{i - \beta - r}+ \varphi\Big(\epsilon_{\beta - 1 + r}(n)\cdot \ldots \cdot \epsilon_{r+1}(n) \Big) \bigg) \nonumber\\
&\qquad\qquad+ \ds\sum_{l = 0}^rg(\epsilon_{l+\beta-1}(n)\cdot\ldots\cdot\epsilon_l(n)). 
\end{align}

Pour $r = 0$, on a par (\ref{condi 1 beta rec}) : 

\begin{align*}
a\left(\ds\sum_{i = 0}^N \epsilon_i(n)q^i\right) &= a\bigg(q^{\beta}\ds\sum_{i = \beta}^{N}\epsilon_i(n)q^{i - \beta}+ \varphi\Big(\epsilon_{\beta - 1 }(n)\cdot \ldots \cdot \epsilon_{1}(n)\cdot\epsilon_0(n) \Big) \bigg)
\\ &= a\bigg(q^{\beta - 1}\ds\sum_{i = \beta}^{N}\epsilon_i(n)q^{i - \beta}+ \varphi\Big(\epsilon_{\beta - 1 }(n)\cdot \ldots \cdot \epsilon_{1}(n) \Big) \bigg) + g(\epsilon_{\beta-1}(n)\cdot\ldots\cdot\epsilon_0(n)).
\end{align*}

Supposons l'hypoth\`ese de r\'ecurrence (\ref{HR1}) satisfaite pour un certain $r \leq N-\beta - 1 $ et montrons (\ref{HR1}) pour $r+1$. Comme $\beta + r + 1 \leq N $, $$\ds\sum_{i = \beta + r + 1}^N \epsilon_i(n)q^{i-\beta-r-1} \geq 1 .$$ Alors, en utilisant  (\ref{HR1}) puis (\ref{condi 1 beta rec}) pour $r>0$ on obtient : 
\begin{align*}
 a\left(\ds\sum_{i = 0}^N \epsilon_i(n)q^i\right) &= a\bigg(q^{\beta - 1}\ds\sum_{i = \beta + r}^{N}\epsilon_i(n)q^{i - \beta - r}+ \varphi\Big(\epsilon_{\beta - 1 + r}(n)\cdot \ldots \cdot \epsilon_{r+1}(n) \Big) \bigg)\\
&\qquad\qquad + \ds\sum_{l = 0}^rg(\epsilon_{l+\beta-1}(n)\cdot\ldots\cdot\epsilon_l(n))
\\ &= a\bigg(q^{\beta - 1 }\ds\sum_{i = \beta + r+1}^{N}\epsilon_i(n)q^{i - \beta - r-1}+ \varphi\Big(\epsilon_{\beta+r}(n)\cdot\epsilon_{\beta - 1 + r}(n)\cdot \ldots \cdot \epsilon_{r+2}(n) \Big) \bigg) \\
&\qquad\qquad + \ds\sum_{l = 0}^{r+1}g(\epsilon_{l+\beta-1}(n)\cdot\ldots\cdot\epsilon_l(n)),
\end{align*}
ce qui conclut la récurrence.
En appliquant  (\ref{HR1}) \`a $r = N - \beta $, on obtient : 
\begin{align*}
a(n)  &= a\bigg(q^{\beta - 1}\epsilon_N(n)+ \varphi\Big(\epsilon_{N - 1 }(n)\cdot \ldots \cdot \epsilon_{N - \beta+1}(n) \Big) \bigg) + \ds\sum_{l = 0}^{N - \beta}g(\epsilon_{l+\beta-1}(n)\cdot\ldots\cdot\epsilon_l(n))
\\ &= a\bigg( \varphi\Big(\epsilon_N(n)\cdot \epsilon_{N - 1 }(n)\cdot \ldots \cdot \epsilon_{N - \beta+1}(n) \Big) \bigg) + \ds\sum_{l = 0}^{N - \beta}g(\epsilon_{l+\beta-1}(n)\cdot\ldots\cdot\epsilon_l(n)),
\end{align*}
et on conclut par (\ref{condi 2 beta rec}), parce que $N$ d\'esigne l'indice du dernier terme non nul dans la d\'ecomposition de $n$, ce qui veut dire $\epsilon_N(n) \neq 0$, et enfin $\epsilon_N(n)\cdot \epsilon_{N-1}(n)\cdots \epsilon_{N-\beta+1}(n)\in \Sigma_\beta$.
\end{proof}

\begin{remark}
Avec l'écriture de la proposition précédente, il suffit pour avoir les exemples de (E1) de prendre la fonction $g$ correspondante. Par exemple pour retrouver $a(n) = \ds\sum_{i \geq 0}\epsilon_{i+\delta}(n)\epsilon_i(n) = \beta_\delta(n)$ (suite d'Allouche et Liardet), on pose $$g(\omega) = \ds\sum_{(i_1,\ldots,i_{\delta-1})\in \{0,\ldots,q-1\}^{\delta-1}}\ds\widehat{\epsilon_{\delta}(\omega)} \mathds{1}_{\widehat{\epsilon_{\delta-1}(\omega)}=i_{\delta-1}} \ldots \mathds{1}_{\widehat{\epsilon_{1}(\omega)}=i_{1}} \widehat{ \epsilon_0(\omega)}.$$ Quant à l'exemple (E2) il suffit de prendre \begin{equation}\label{forme bloc additive}a(\varphi(\omega)) = \ds\sum_{i = 1}^{\beta - 1}g(0^{\beta - i}\cdot \overline{\omega}^{ i})\end{equation} si  $\overline{\omega}^1 \neq 0$ et $a(\varphi(\omega)) = 0$ si $\overline{\omega}^1 = 0$.
\end{remark}

\section{R\'esultats principaux}\label{Section tous les resultats}

Pour cette partie nous rappelons que $\tau(n)$ d\'esigne le nombre de diviseurs de $n$, et $\omega(n)$ le nombre de facteurs premiers dans la d\'ecomposition de $n$ (ainsi $\omega(2^2*3) = 2$). Les deux notations $\omega$ pour désigner un mot et la suite arithmétique $\omega (n)$ ne se recoupent pas dans l'article, et nous pouvons utiliser conjointement ces deux notations sans risque de confusion. De plus nous notons $\pi(x;a,m):= \#\{p \leq x : p \equiv a \bmod m\}$.

Il nous est n\'ecessaire par la suite de d\'efinir des fonctions tronqu\'ees et de travailler sur le cercle unité $\mathbb{U}$. En effet, notre article repose sur les r\'esultats de~\cite{RudinShapiro} qui utilisent des sommes d'exponentielles, et o\`u le principe de troncation est essentiel. 

\begin{definition}
Soit $(a(n))_{n \in \N}$ une suite $\beta$-r\'ecursive, et soit $\lambda$ un entier naturel. On d\'efinit $(a^{(\lambda)}(n))_{n \in \N}$, la suite tronqu\'ee en $\lambda$, par $$a^{(\lambda)}(n) = a(n \, \text{mod} \, q^{\lambda}),$$ o\`u $n \, \text{mod} \, q^{\lambda} $ d\'esigne le reste de la division euclidienne de $n$ par $q^{\lambda} $. Soit $\alpha$ un nombre r\'eel. On d\'efinit les applications $f : \N \rightarrow \mathbb{U} $ et $f^{(\lambda)} : \N \rightarrow \mathbb{U} $ par  $$f(n)=e\big(\alpha a(n)\big) \quad \text{et}\quad f^{(\lambda)}(n)=e\big(\alpha a^{(\lambda)}(n)\big).$$ On dit qu'elles sont associ\'ees aux suites $(a(n))_{n \in \N}$ et $(a^{(\lambda)}(n))_{n \in \N}$.
\end{definition}

Les définitions \ref{MR def 1} et \ref{propriete fourier mauduit rivat} ainsi que le Théorème \ref{notre gros theoreme} prennent ici un sens rigoureux. Tout comme Mauduit et Rivat, nous d\'eduisons de ce th\'eor\`eme trois corollaires, dont les preuves sont identiques \`a~\cite[Corollary~1--3]{RudinShapiro} :

\begin{corollary}\label{coro1}
Soit $b : \N \rightarrow \N$ telle que pour tout $\alpha$ irrationnel, la fonction $f(n) = e(\alpha b(n))$ v\'erifie les d\'efinitions \ref{MR def 1} et \ref{propriete fourier mauduit rivat}. Alors pour tout entier relatif $a$ et tout entier naturel $m$ premier avec $a$, la suite $(\alpha b(p))_{p \in \mathcal{P}(a,m)}$ est uniform\'ement distribu\'ee si et seulement si $\alpha$ est irrationnel.
\end{corollary}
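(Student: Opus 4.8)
La stratégie repose sur le critère de Weyl : la suite $(\alpha b(p))_{p \in \mathcal{P}(a,m)}$ est uniformément distribuée modulo $1$ si et seulement si, pour tout entier $h \neq 0$,
\[
\frac{1}{\pi(x;a,m)}\sum_{\substack{p \le x \\ p \equiv a \bmod m}} e(h\alpha b(p)) \longrightarrow 0 \qquad (x \to +\infty).
\]
Je traiterais d'abord le sens direct (« $\alpha$ irrationnel $\Rightarrow$ uniforme distribution »). Fixons $h \neq 0$. L'observation décisive est que $h\alpha$ est encore irrationnel, de sorte que la fonction $f(n) = e(h\alpha b(n)) = e\big((h\alpha) b(n)\big)$ vérifie par hypothèse les Définitions \ref{MR def 1} et \ref{propriete fourier mauduit rivat} ; c'est précisément pour cela que l'hypothèse est énoncée pour tout $\alpha$ irrationnel. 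On peut donc lui appliquer le Théorème \ref{notre gros theoreme}.

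Deuxième étape : détecter la progression arithmétique. Par orthogonalité des caractères additifs modulo $m$,
\[
\sum_{\substack{n \le x \\ n \equiv a \bmod m}} \Lambda(n) e(h\alpha b(n)) = \frac{1}{m}\sum_{j=0}^{m-1} e\!\left(-\frac{ja}{m}\right) \sum_{n \le x} \Lambda(n) f(n) e\!\left(\frac{j}{m}\,n\right).
\]
Chacune des $m$ sommes intérieures est exactement du type estimé par le Théorème \ref{notre gros theoreme} (avec $\vartheta = j/m$), et la majoration y est uniforme en $\vartheta$. Comme $\gamma(\lambda) \to +\infty$, le facteur $q^{-\gamma(\cdots)/20 + \log(\cdots)}$ tend vers $0$, si bien que la majoration obtenue est $o(x)$ ; en sommant les $m$ termes (en nombre fini, indépendant de $x$), on obtient
\[
\sum_{\substack{n \le x \\ n \equiv a \bmod m}} \Lambda(n) e(h\alpha b(n)) = o(x).
\]

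Troisième étape : passer de $\Lambda$ aux nombres premiers. La contribution des puissances de nombres premiers propres est $O(\sqrt{x}\log x)$, donc négligeable, puis une sommation d'Abel retire le poids $\log p$ ; en divisant par $\pi(x;a,m) \sim x/(\phi(m)\log x)$ (théorème des nombres premiers en progression arithmétique, licite car $\gcd(a,m)=1$), on conclut que la somme de Weyl tend vers $0$ pour tout $h \neq 0$, d'où l'uniforme distribution.

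Pour la réciproque, je procéderais par contraposée : si $\alpha = r/s$ est rationnel (fraction réduite), alors en prenant $h = s$ on a $h\alpha\, b(p) = r\,b(p) \in \Z$, donc $e(h\alpha b(p)) = 1$ pour tout $p$, et la somme de Weyl correspondante vaut identiquement $1$ ; la suite n'est donc pas uniformément distribuée. Le point le plus délicat est le deuxième : il faut s'assurer que la décroissance fournie par le Théorème \ref{notre gros theoreme}, uniforme en $\vartheta = j/m$, l'emporte sur le facteur polylogarithmique $(\log x)^{c_2(q)}$ une fois la somme divisée par le cardinal $\pi(x;a,m) \asymp x/\log x$ ; c'est la divergence $\gamma(\lambda) \to +\infty$ qui garantit que le quotient tend effectivement vers $0$. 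Les autres étapes sont standard et identiques à \cite[Corollary~1]{RudinShapiro}.
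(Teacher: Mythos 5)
Votre démonstration est correcte et suit essentiellement la même démarche que celle du papier, lequel renvoie pour ce corollaire à la preuve de \cite[Corollary~1]{RudinShapiro} : critère de Weyl, remarque que $h\alpha$ reste irrationnel (d'où l'importance de l'hypothèse « pour tout $\alpha$ irrationnel »), détection de la progression par orthogonalité des caractères additifs, application du Théorème~\ref{notre gros theoreme} uniformément en $\vartheta=j/m$, passage de $\Lambda$ aux premiers, et contraposée pour le sens réciproque. Rien à redire.
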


\begin{corollary}\label{coro2}
Soit $b : \N \rightarrow \N$ et $m$ et $m'$ des entiers plus grands que $1$, tels que pour tout $1 \leq j' < m'$, la fonction $f(n) = e\left(\frac{j'}{m'}b(n) \right)$ v\'erifie les d\'efinitions \ref{MR def 1} et \ref{propriete fourier mauduit rivat}. Alors, pour tous $a$ et $a'$ tel que $a$ soit premier avec $m$, on a, lorsque $x$ tend vers l'infini : $$\# \{p\leq x, \; p \in \mathcal{P}(a,m), \; b(p) \equiv a' \bmod m' \} = (1+o(1))\frac{\pi (x;a,m)}{m'} .$$

\end{corollary}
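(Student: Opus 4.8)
Le plan est de d\'etecter les deux conditions de congruence au moyen de caract\`eres additifs, puis d'invoquer le Th\'eor\`eme~\ref{notre gros theoreme} pour chacune des sommes d'exponentielles ainsi produites ; c'est pr\'ecis\'ement l'uniformit\'e en $\vartheta \in \R$ de ce th\'eor\`eme qui autorise la d\'etection \emph{additive} de la progression $p \equiv a \bmod m$. On travaille d'abord avec le poids de von Mangoldt en posant
\[
\psi_b(x) := \ds\sum_{\substack{n \leq x \\ n \equiv a \bmod m \\ b(n) \equiv a' \bmod m'}} \Lambda(n).
\]
En \'ecrivant les indicatrices des deux congruences sous la forme
\[
\one_{n \equiv a \bmod m} = \frac{1}{m}\ds\sum_{h=0}^{m-1}e\!\left(\frac{h(n-a)}{m}\right), \qquad \one_{b(n) \equiv a' \bmod m'} = \frac{1}{m'}\ds\sum_{j'=0}^{m'-1}e\!\left(\frac{j'(b(n)-a')}{m'}\right),
\]
on obtient
\[
\psi_b(x) = \frac{1}{mm'}\ds\sum_{h=0}^{m-1}\ds\sum_{j'=0}^{m'-1}e\!\left(-\frac{ha}{m}-\frac{j'a'}{m'}\right)\ds\sum_{n \leq x}\Lambda(n)\,e\!\left(\frac{j'b(n)}{m'}\right)e\!\left(\frac{hn}{m}\right).
\]

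La contribution des termes avec $j' = 0$, somm\'ee sur $h$, reconstitue $\frac{1}{m'}\psi(x;a,m)$ o\`u $\psi(x;a,m) = \sum_{n \leq x,\, n \equiv a \bmod m}\Lambda(n)$ : c'est le terme principal. Pour chaque couple $(h,j')$ avec $1 \leq j' < m'$, la fonction $f(n) = e(j'b(n)/m')$ v\'erifie par hypoth\`ese les D\'efinitions~\ref{MR def 1} et~\ref{propriete fourier mauduit rivat}, de sorte que le Th\'eor\`eme~\ref{notre gros theoreme} appliqu\'e avec $\vartheta = h/m$ donne
\[
\left|\ds\sum_{n \leq x}\Lambda(n)\,e\!\left(\frac{j'b(n)}{m'}\right)e\!\left(\frac{hn}{m}\right)\right| = o(x),
\]
la d\'ecroissance provenant de ce que $\gamma(\lambda) \to +\infty$. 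Le nombre de couples $(h,j')$ \'etant born\'e ind\'ependamment de $x$, on en d\'eduit $\psi_b(x) = \frac{1}{m'}\psi(x;a,m) + o(x)$. Comme $a$ est premier avec $m$, le th\'eor\`eme des nombres premiers en progression arithm\'etique fournit $\psi(x;a,m) = (1+o(1))\,x/\phi(m)$, o\`u $\phi$ d\'esigne la fonction indicatrice d'Euler ; d'o\`u $\psi_b(x) = (1+o(1))\,x/(m'\phi(m))$.

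Il reste \`a revenir au comptage des nombres premiers. Les puissances $p^k$ avec $k \geq 2$ ne contribuent que $O(\sqrt{x}\log x) = o(x)$ \`a $\psi_b(x)$, de sorte que $\Theta_b(x) := \sum_{p \leq x,\, p \in \mathcal{P}(a,m),\, b(p) \equiv a' \bmod m'}\log p$ v\'erifie encore $\Theta_b(x) = (1+o(1))\,x/(m'\phi(m))$. Cette estimation \'etant valable pour tout $x \to +\infty$, une sommation d'Abel donne
\[
\ds\sum_{\substack{p \leq x \\ p \in \mathcal{P}(a,m) \\ b(p) \equiv a' \bmod m'}} 1 = \frac{\Theta_b(x)}{\log x} + \ds\int_2^x \frac{\Theta_b(t)}{t(\log t)^2}\,dt = (1+o(1))\frac{x}{m'\phi(m)\log x},
\]
l'int\'egrale \'etant $O\!\left(x/(\log x)^2\right) = o(x/\log x)$. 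Enfin $\pi(x;a,m) = (1+o(1))\,x/(\phi(m)\log x)$, ce qui conduit exactement \`a la formule annonc\'ee.

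L'essentiel de la difficult\'e \'etant contenu dans le Th\'eor\`eme~\ref{notre gros theoreme}, la d\'eduction ci-dessus est de nature standard. Le point demandant le plus de soin sera la comparaison entre le terme d'erreur du th\'eor\`eme et le terme principal, d'ordre $x$ : il faudra s'assurer que le facteur $q^{-\gamma(\cdot)/20 + \log(\gamma(\cdot)/20)}$, multipli\'e par $(\log x)^{c_2(q)}$, tend vers $0$. Ceci a lieu d\`es que $\gamma$ cro\^it assez vite ; dans toutes les applications vis\'ees $\gamma$ est lin\'eaire (voir la partie~\ref{section genealogie}), ce qui fournit m\^eme un gain d'une puissance de $x$ et rend la comparaison imm\'ediate.
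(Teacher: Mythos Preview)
Your argument is correct and follows the same standard route the paper alludes to (the paper does not give its own proof but refers to~\cite[Corollary~2]{RudinShapiro}, where the deduction is exactly this: detect both congruences by orthogonality, isolate the main term at $j'=0$, kill the remaining exponential sums via Th\'eor\`eme~\ref{notre gros theoreme} using its uniformity in $\vartheta$, then pass from $\Lambda$ to the prime count by partial summation). Your closing caveat about needing the error term of Th\'eor\`eme~\ref{notre gros theoreme} to beat $(\log x)^{c_2(q)}$ is a fair observation and applies equally to the paper's own statement; it is not a defect of your deduction.
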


\begin{corollary}\label{coro3}
Soit $b : \N \rightarrow \N$ et $m$ et $m'$ des entiers plus grands que $1$, tels que pour tout $1 \leq j' < m'$, la fonction $f(n) = e\left(\frac{j'}{m'}b(n) \right)$ v\'erifie les d\'efinitions \ref{MR def 1} et \ref{propriete fourier mauduit rivat}. Alors, pour tout $a$ et $a'$ tel que $a$ soit premier avec $m$, la suite ($\vartheta p)_{\{ p \in \mathcal{P}(a,m),\; b(p) \equiv a' \bmod m' \}}$ est uniform\'ement distribu\'ee si et seulement si $\vartheta$ est irrationnel.
\end{corollary}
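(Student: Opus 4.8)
Le plan est d'appliquer le crit\`ere de Weyl. Notons $E_x = \{p \leq x : p \in \mathcal{P}(a,m),\ b(p) \equiv a' \bmod m'\}$. D'apr\`es le Corollaire~\ref{coro2}, $\# E_x = (1+o(1))\pi(x;a,m)/m' \to +\infty$, de sorte que la suite $(\vartheta p)$ index\'ee par cet ensemble est bien infinie et que la normalisation a un sens. La condition n\'ecessaire est imm\'ediate : si $\vartheta = u/v \in \Q$, alors $\vartheta p \bmod 1$ ne prend qu'un nombre fini de valeurs, et une suite \'equir\'epartie modulo $1$ ne peut \^etre support\'ee par un ensemble fini. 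Pour la r\'eciproque, supposons $\vartheta$ irrationnel (donc $h\vartheta$ aussi pour tout $h \neq 0$) ; par le crit\`ere de Weyl il suffit de montrer que pour tout entier $h \neq 0$,
\[
S_h(x) := \sum_{\substack{p \leq x \\ p \equiv a \bmod m \\ b(p) \equiv a' \bmod m'}} e(h\vartheta p) = o(\# E_x), \qquad x \to +\infty.
\]

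Je d\'etecterais ensuite les deux conditions de congruence par orthogonalit\'e des caract\`eres additifs :
\[
\mathds{1}_{n \equiv a \bmod m} = \frac{1}{m}\sum_{r=0}^{m-1} e\!\left(\frac{r(n-a)}{m}\right), \qquad \mathds{1}_{b(p) \equiv a' \bmod m'} = \frac{1}{m'}\sum_{j'=0}^{m'-1} e\!\left(\frac{j'(b(p)-a')}{m'}\right).
\]
En ins\'erant ces deux identit\'es dans $S_h(x)$ et en posant $\vartheta_r = h\vartheta + r/m$, on obtient une combinaison lin\'eaire \`a $mm'$ termes, \`a coefficients des racines de l'unit\'e de module $1$, des sommes
\[
T(j',r) = \sum_{p \leq x} e\!\left(\tfrac{j'}{m'}b(p)\right) e(\vartheta_r p).
\]
Comme $\# E_x \asymp \pi(x)$, il reste \`a majorer chaque $T(j',r)$ par $o(\pi(x))$.

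Je distinguerais alors deux cas. Pour $j' = 0$, la somme $T(0,r) = \sum_{p \leq x} e(\vartheta_r p)$ porte sur une fr\'equence irrationnelle (somme de l'irrationnel $h\vartheta$ et du rationnel $r/m$), et l'estimation classique de Vinogradov des sommes d'exponentielles sur les nombres premiers donne $T(0,r) = o(\pi(x))$. Pour $j' \neq 0$, la fonction $f(n) = e(\tfrac{j'}{m'}b(n))$ v\'erifie par hypoth\`ese les D\'efinitions~\ref{MR def 1} et~\ref{propriete fourier mauduit rivat} ; le Th\'eor\`eme~\ref{notre gros theoreme}, appliqu\'e avec la fr\'equence $\vartheta_r$ (l'estimation y \'etant uniforme), fournit $\sum_{n \leq x}\Lambda(n) f(n) e(\vartheta_r n) = o(x)$, car le facteur $q^{-\gamma/20 + \log(\gamma/20)}$ tend vers $0$ lorsque $x\to+\infty$ (puisque $\gamma(\lambda)\to+\infty$) et l'emporte sur la puissance $(\log x)^{c_2(q)}$. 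On transf\`ere enfin cette borne de $\Lambda$ vers les nombres premiers de mani\`ere routini\`ere (contribution des puissances de premiers en $O(\sqrt{x}\log x)$, puis sommation d'Abel pour retirer le poids $\log$), d'o\`u $T(j',r) = o(\pi(x))$. En rassemblant les deux cas, $S_h(x) = o(\pi(x)) = o(\# E_x)$, ce qui conclut par le crit\`ere de Weyl. L'essentiel du travail analytique est concentr\'e dans le cas $j' \neq 0$, c'est-\`a-dire dans le Th\'eor\`eme~\ref{notre gros theoreme} lui-m\^eme ; une fois celui-ci acquis, l'obstacle restant se r\'eduit \`a l'assemblage de Fourier et au passage standard de $\Lambda$ aux premiers, uniform\'ement en $\vartheta_r$.
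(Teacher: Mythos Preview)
Your argument is correct and follows exactly the standard route that the paper has in mind: the paper does not give its own proof but refers to \cite[Corollary~3]{RudinShapiro}, whose proof is precisely the Weyl-criterion/orthogonality decomposition you wrote out, with the split $j'=0$ (Vinogradov) versus $j'\neq 0$ (Th\'eor\`eme~\ref{notre gros theoreme}) and the routine passage from $\Lambda$ to primes.
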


La particularit\'e des suites $\beta$-r\'ecursives permet de plus d'avoir le r\'esultat suivant, qui r\'epond partiellement \`a la question de Kalai (nous g\'en\'eralisons directement en base $q$ arbitraire) : 

\begin{theorem}\label{Thm Kalai}
Soit $k\geq 1$ et $$a(n) = \ds\sum_{i = 0}^{T_q(n)-k}P(\epsilon_{i+k}(n),\ldots,\epsilon_i(n)),$$ o\`u $\epsilon_0(n), \ldots, \epsilon_{T_q(n)}(n)$ sont les chiffres de $n$ en base $q$, et $P\in \mathbb{Z}[X_k,\ldots, X_0]$ est un polyn\^ome de degr\'e $d \leq k+1$ de la forme 
$$P(X_k,\ldots, X_0)=X_k X_0 P_1(X_k,\ldots, X_0)+P_2(X_k,\ldots, X_0),$$
o\`u $P_1, P_2\in \mathbb{Z}[X_k,\ldots, X_0]$ sont tels que l'\'equation $P_1(1,X_{k-1},\ldots,X_1,1)=1$ poss\`ede une solution $(X_{k-1},\ldots,X_1)\in\{0,1,\ldots,q-1\}^{k-1}$, et il n'y a pas de mon\^ome non nul divisible par $X_kX_0$ dans $P_2$. Alors 
\begin{equation}
\lim_{N \rightarrow \infty}\ds\frac{1}{N}\ds\sum_{n \leq N}\mu(n)(-1)^{a(n)} = 0.
\end{equation}
\end{theorem}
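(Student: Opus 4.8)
Le plan est de déduire l'énoncé du Théorème \ref{notre gros theoreme}, pris dans sa version avec la fonction de Möbius $\mu$ à la place de $\Lambda$ (Remarque \ref{Remarque gros thm}), appliqué à $\vartheta = 0$ et à la fonction $f(n) = (-1)^{a(n)} = e\big(\tfrac12 a(n)\big)$, c'est-à-dire au paramètre $\alpha = 1/2$. D'abord, je constate que $a$ est une suite $\beta$-récursive de fenêtre $\beta = k+1$ : en effet $a(n) = \sum_{i=0}^{T_q(n)-k} g\big(\epsilon_{i+k}(n)\cdots\epsilon_i(n)\big)$ avec $g$ obtenue en évaluant $P$ sur les $k+1$ chiffres du mot, ce qui est exactement la forme (E3) des suites bloc-additives finies. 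Il me reste alors à vérifier que $f$ satisfait les Définitions \ref{MR def 1} et \ref{propriete fourier mauduit rivat}, puis à montrer que le membre de droite de \eqref{nouvelle equation von mangoldt} est un $o(N)$.

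La faible propriété de petite propagation (Définition \ref{MR def 1}) est indépendante de la valeur de $\alpha$ et ne repose que sur la structure $\beta$-récursive de $a$ ; je l'obtiendrais en invoquant le résultat général de la partie \ref{section petite propa}, valable pour toute suite $\beta$-récursive avec le terme d'erreur affaibli $O(q^{\lambda-\rho+\log\rho})$. C'est précisément pour pouvoir traiter les suites bloc-additives finies de ce type que cette condition a été assouplie par rapport à \cite{RudinShapiro}.

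Le cœur de la preuve est la vérification de la propriété de Fourier (Définition \ref{propriete fourier mauduit rivat}) pour $\alpha = 1/2$. Je ramènerais l'estimation de $q^{-\lambda}\sum_{0\le n<q^\lambda} f(q^\kappa n)\,e(-nt)$ au contrôle de la norme infinie de la matrice associée à la suite, dont la valeur exacte est fournie par la Proposition \ref{prop equa a prouver thm principal}. La fonction $\gamma$ que l'on en tire croît linéairement, avec une pente proportionnelle à $-\log_q\|M\|_\infty$, de sorte que $\gamma(\lambda)\to+\infty$ si et seulement si $\|M\|_\infty<1$. L'obstacle principal est donc d'établir cette inégalité stricte à partir de la forme particulière de $P$. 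C'est ici qu'interviennent les deux hypothèses : comme $P_2$ ne contient aucun monôme divisible par $X_kX_0$, la corrélation entre le premier et le dernier chiffre de chaque fenêtre est entièrement portée par le terme $X_kX_0P_1$ ; et comme l'équation $P_1(1,X_{k-1},\ldots,X_1,1)=1$ possède une solution dans $\{0,\ldots,q-1\}^{k-1}$, il existe une configuration de chiffres pour laquelle cette corrélation prend une valeur impaire, produisant le déphasage $e(\tfrac12)=-1$. Cette interférence destructrice interdit aux coefficients de la matrice de s'aligner tous sur une même phase et force $\|M\|_\infty<1$ : c'est le phénomène de Rudin-Shapiro transposé modulo $2$.

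Une fois ces deux propriétés acquises, le Théorème \ref{notre gros theoreme} (version $\mu$) avec $\vartheta=0$ fournit une majoration de $\big|\sum_{n\le N}\mu(n)(-1)^{a(n)}\big|$ par le membre de droite de \eqref{nouvelle equation von mangoldt}. Puisque $\gamma$ croît linéairement et que son argument est d'ordre $\log N$, le facteur $q^{-\gamma/20+\log(\gamma/20)}$ est une puissance négative de $N$ ; la majoration est donc $\ll N^{1-\delta}(\log N)^{c_2(q)}$ pour un $\delta>0$, et en particulier un $o(N)$. En divisant par $N$ et en faisant tendre $N$ vers l'infini, on obtient $\lim_{N\to\infty}\tfrac1N\sum_{n\le N}\mu(n)(-1)^{a(n)}=0$, comme annoncé.
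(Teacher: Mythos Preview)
Ton plan est exactement celui du papier : identifier $a$ comme suite $(k+1)$-récursive de type (E3), invoquer la Proposition \ref{Prop petite propa suite type RS} pour la faible propagation, puis vérifier la propriété de Fourier via le contrôle de $\|\widetilde{M}(\alpha,t)\|_\infty$, et conclure par le Théorème \ref{notre gros theoreme} dans sa version $\mu$ (Remarque \ref{Remarque gros thm}).

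La seule différence est que tu laisses l'étape cruciale --- le contrôle strict de la norme de la matrice --- à l'état d'heuristique (\og interférence destructrice \fg, \og valeur impaire \fg), alors que le papier la rend entièrement explicite et calculatoire. L'outil est le Corollaire \ref{Scolie} (et non la Proposition \ref{prop equa a prouver thm principal} seule) : il suffit d'exhiber $\omega_1,\omega_2\in\Sigma_k$ de même suffixe de longueur $k-1$ et $k_1,k_2\in\Sigma_1$ tels que la double différence $K=g(\omega_1 k_1)-g(\omega_1 k_2)-g(\omega_2 k_1)+g(\omega_2 k_2)$ vérifie $\tfrac12 K\notin\Z$. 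Le papier prend $\omega_1=1\cdot x_{k-1}\cdots x_1$, $\omega_2=0\cdot x_{k-1}\cdots x_1$, $k_1=1$, $k_2=0$, où $(x_{k-1},\ldots,x_1)$ est la solution fournie par l'hypothèse. Comme aucun monôme de $P_2$ n'est divisible par $X_kX_0$, chaque monôme de $P_2$ est soit indépendant de $X_k$, soit indépendant de $X_0$, et s'annule donc dans la double différence ; il reste $K=P_1(1,x_{k-1},\ldots,x_1,1)=1$. Ainsi $\|\tfrac12 K\|_{\Z}=\tfrac12$ et \eqref{equation scolie} donne $\|\widetilde{M}(\tfrac12,t)\|_\infty\le q^{k+1}-4$ uniformément en $t$, d'où une $\gamma$ linéaire. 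Ta condition \og $\|M\|_\infty<1$ \fg{} n'est correcte que si $M$ désigne $q^{-\beta}\widetilde{M}$ ; autant formuler directement $\|\widetilde{M}\|_\infty<q^\beta$ pour coller à la Proposition \ref{prop propagation vecteur}.
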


On remarque que $P(X_k,\ldots,X_0) = \ds\prod_{i=0}^k X_i$ ainsi que $P(X_k,\ldots,X_0) = \ds\prod_{i=0}^k(1- X_i)$ vérifient les conditions du théorème, \textit{a contrario} de $P(X_k,\ldots,X_0) = X_k+X_0$. Plus g\'en\'eralement, notons que les m\'ethodes d\'evelopp\'ees dans le pr\'esent article ne permettent pas de traiter le cas o\`u le polyn\^ome est bilin\'eaire en $X_0,X_k$, ni le cas $a(n) = \epsilon_{T_q(n)-2}(n)\epsilon_2(n)$, o\`u l'indice d\'epend \'egalement de $n$.

\section{Petite propagation}\label{section petite propa}
Ici, et désormais, nous fixons $q$ et $\beta$ des entiers sup\'erieurs ou \'egaux \`a $2$. Le but de cette partie est de d\'emontrer que les fonctions associ\'ees aux suites $\beta$-r\'ecursives v\'erifient la faible propri\'et\'e de petite propagation. L'id\'ee principale consiste \`a exploiter la Proposition \ref{Decomposition pour une suite beta recursive} pour dire que sous certaines conditions, il n'y a pas de diff\'erence entre $f(n)$ et $f^{(\lambda)}(n)$. 

\begin{proposition}\label{Prop petite propa suite type RS}
Soit $(a(n))_{n \in \N}$ une suite $\beta$-r\'ecursive, et $f$ sa fonction associ\'ee. Alors  $f$ a la faible propri\'et\'e de petite propagation.
\end{proposition}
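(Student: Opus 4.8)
Le plan est de ramener la faible propri\'et\'e de petite propagation de $f$ \`a une propri\'et\'e de propagation de la suite $a$ elle-m\^eme, mais \`a l'\'echelle $\rho$. On pose $n_1=lq^{\kappa}+k_1$ et $n_2=lq^{\kappa}+k_1+k_2$, et l'on \'ecrit $k_1+k_2=cq^{\kappa}+k_3$ avec $c\in\{0,1\}$ (la retenue en position $\kappa$) et $0\le k_3<q^{\kappa}$. Comme $f(n)=e(\alpha a(n))$, si l'in\'egalit\'e \eqref{petite propagation} a lieu alors $a(n_2)-a(n_1)\neq a^{(\kappa+\rho)}(n_2)-a^{(\kappa+\rho)}(n_1)$ en tant qu'entiers ; il suffit donc de majorer le nombre de $l<q^{\lambda}$ pour lesquels cette derni\`ere in\'egalit\'e a lieu pour un certain couple $(k_1,k_2)$. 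On peut supposer $\kappa\ge1$ (sinon $k_1=k_2=0$) et $\rho\ge\beta$, le cas $\rho\le\beta-1$ \'etant trivial puisque le nombre de $l$ est au plus $q^{\lambda}\le q^{\beta-1}q^{\lambda-\rho+\log\rho}$ dans ce r\'egime.

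L'ingr\'edient principal est une d\'ecomposition \emph{\`a la couture} (en position $\kappa$) issue de la Proposition \ref{Decomposition pour une suite beta recursive}. Pour un entier $m$ dont la partie haute $L=\lfloor m/q^{\kappa}\rfloor$ v\'erifie $L\ge q^{\beta-1}$, en scindant la somme de blocs selon que le bloc est contenu dans les positions $<\kappa$, chevauche la position $\kappa$, ou est contenu dans les positions $\ge\kappa$, on obtient
\[ a(m)=S(m)+a(L),\qquad S(m):=\sum_{0\le l'\le\kappa-1}g\big(\epsilon_{l'+\beta-1}(m)\cdots\epsilon_{l'}(m)\big), \]
o\`u $S(m)$ ne d\'epend que des chiffres de $m$ de position $\le\kappa+\beta-2$. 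Comme $\rho\ge\beta$, les entiers $n_i$ et $m_i:=n_i\bmod q^{\kappa+\rho}$ co\"incident sur ces positions, de sorte que $S(n_i)=S(m_i)$. En appliquant cela \`a $n_1,n_2,m_1,m_2$, dont les parties hautes sont respectivement $l$, $l+c$, $l\bmod q^{\rho}$ et $(l+c)\bmod q^{\rho}$, les termes de couture se simplifient et il vient
\[ \big(a(n_2)-a(n_1)\big)-\big(a^{(\kappa+\rho)}(n_2)-a^{(\kappa+\rho)}(n_1)\big)=\delta_{\rho}(l+c)-\delta_{\rho}(l), \]
o\`u $\delta_{\rho}(x):=a(x)-a(x\bmod q^{\rho})$ (les cas o\`u l'une de ces parties hautes est $<q^{\beta-1}$ seront trait\'es \`a part). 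On notera que $\kappa$ a disparu, et que pour $c=0$ cette diff\'erence est nulle : de tels $l$ sont bons.

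Il reste alors \`a majorer le cardinal de $\{l<q^{\lambda}:\delta_{\rho}(l)\neq\delta_{\rho}(l+1)\}$. De nouveau par la Proposition \ref{Decomposition pour une suite beta recursive}, $\delta_{\rho}(x)$ ne d\'epend que de la position $j=T_q(x\bmod q^{\rho})$ du chiffre de t\^ete de la fen\^etre $x\bmod q^{\rho}$ et des chiffres de $x$ de position $\ge j-\beta+2$. Ainsi $\delta_{\rho}(l)=\delta_{\rho}(l+1)$ d\`es que le passage de $l$ \`a $l+1$ ne change pas $j$ et ne propage pas de retenue jusqu'en position $\ge j-\beta+2$. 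Pour chaque valeur de $j$ (il y en a au plus $\rho$), ces deux ph\'enom\`enes ne concernent que $O(q^{\beta-1})$ r\'esidus modulo $q^{\rho}$ ; en y ajoutant les $O(q^{\beta-1})$ fen\^etres d\'eg\'en\'er\'ees (celles o\`u $j<\beta-1$), on compte au plus $O(\rho\,q^{\beta-1})$ r\'esidus modulo $q^{\rho}$. Comme chacun est atteint par au plus $q^{\lambda-\rho}$ entiers $l<q^{\lambda}$, on obtient $O\big(\rho\,q^{\beta-1}\,q^{\lambda-\rho}\big)=O\big(q^{\lambda-\rho+\log\rho}\big)$, la constante ne d\'ependant que de $q$ et $\beta$, donc de $q$ et $f$.

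Le principal obstacle sera le traitement des configurations d\'eg\'en\'er\'ees, o\`u l'une des parties hautes $L$ ou l'une des fen\^etres $x\bmod q^{\rho}$ est $<q^{\beta-1}$ : la d\'ecomposition $a(m)=S(m)+a(L)$ n'y est plus exacte, car la partie de t\^ete de la Proposition \ref{Decomposition pour une suite beta recursive} empi\`ete sur la zone de couture. Ces $l$ restent peu nombreux et s'absorbent dans la constante. C'est pr\'ecis\'ement la sommation sur les $\sim\rho$ positions possibles du chiffre de t\^ete de la fen\^etre $l\bmod q^{\rho}$ qui fait appara\^itre le facteur $\rho$, et donc l'alt\'eration de la condition $O(q^{\lambda-\rho})$ de \cite{RudinShapiro} en $O(q^{\lambda-\rho+\log\rho})$ : pour une suite $\beta$-r\'ecursive g\'en\'erale, les contributions des diff\'erentes positions de t\^ete ne se compensent pas.
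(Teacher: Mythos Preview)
Your argument is correct and reaches the same bound as the paper, but the route is genuinely different.

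The paper first establishes an intermediate result (Proposition~\ref{Prop pour contraposee petite propa}) giving explicit digit conditions on $l$ (and $k_1$) that guarantee the equality in~\eqref{petite propagation}: namely, the existence of a position $i<\rho-\beta+2$ with $\epsilon_i(l)\neq q-1$ together with a ``safety'' position $j$ with $i+\beta-2<j<\rho$ and $\epsilon_j(l)\neq 0$. The bad set is then contained in three pieces $A\cup B\cup C$, whose cardinalities are computed directly; the factor~$\rho$ arises because the sets $B_i$ (indexed by the carry--stopping position $i$) are mutually in bijection and there are $\rho-\beta+2$ of them.

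You instead eliminate $\kappa,k_1,k_2$ at the outset via the seam decomposition $a(m)=S(m)+a(\lfloor m/q^\kappa\rfloor)$, reducing everything to the single function $\delta_\rho(l)=a(l)-a(l\bmod q^\rho)$ of~$l$; the bad set becomes $\{l:\delta_\rho(l)\neq\delta_\rho(l+1)\}$. Your structural observation that $\delta_\rho(l)$ depends only on $j=T_q(l\bmod q^\rho)$ and on the digits of~$l$ at positions~$\ge j-\beta+2$ is exactly the dual of the paper's conditions: where the paper parametrises the bad set by the carry--stopping position~$i$ and asks for a nonzero digit above it, you parametrise by the leading--digit position~$j$ and ask for the carry to stop below it. The factor~$\rho$ then appears as the number of admissible values of~$j$.

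Your approach has the advantage of making the reduction to a one--parameter problem in~$l$ explicit and of isolating cleanly \emph{why} the weakening from $O(q^{\lambda-\rho})$ to $O(q^{\lambda-\rho+\log\rho})$ is necessary: the position~$j$ of the leading digit of the window genuinely varies, and the different values of~$j$ contribute disjoint bad sets. The paper's approach, on the other hand, avoids the intermediate function~$\delta_\rho$ and works directly with the four quantities $a(n_1),a(n_2),a(n_1'),a(n_2')$, which keeps the argument closer to the original Rudin--Shapiro case treated in~\cite{RudinShapiro}. Your handling of the degenerate configurations (any of $l,l+1,l\bmod q^\rho,(l+1)\bmod q^\rho$ below $q^{\beta-1}$) is correct: each contributes $O(q^{\beta-1})$ residues modulo~$q^\rho$, hence $O(q^{\beta-1}q^{\lambda-\rho})$ values of~$l$, absorbed in the constant.
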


Nous allons tout d'abord d\'emontrer un r\'esultat interm\'ediaire : 

\begin{proposition}\label{Prop pour contraposee petite propa}
Soient $(a(n))_{n \in \N}$ une suite $\beta$-r\'ecursive, et $f$ sa fonction associ\'ee. Soient $(\lambda, \kappa, \rho) \in \mathbb{N}^3$ avec $\rho < \lambda$ et $ \kappa \geq 1$. Soient des entiers $l > q^{\rho}$ et $k_1 < q^{\kappa}$. Supposons qu'il existe un entier $m$ tel que $0 \leq m < \rho - \beta + 2 $ avec $\epsilon_{\kappa + m}(lq^{\kappa}+k_1)\neq q-1$ et que, si on note $i$ le plus petit de ces $m$, il existe un entier $j$ v\'erifiant $i + \beta - 2 < j < \rho $ et $\epsilon_{\kappa + j}(lq^{\kappa}+k_1) \neq 0$. Alors pour tout entier $k_2 < q^{\kappa}$ : \begin{equation*}
f(lq^{\kappa}+k_1+k_2)\overline{f(lq^{\kappa}+k_1)} = f^{(\kappa + \rho)}(lq^{\kappa}+k_1+k_2)\overline{f^{(\kappa + \rho)}(lq^{\kappa}+k_1)}.
\end{equation*}  
\end{proposition}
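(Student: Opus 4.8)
The plan is to turn the desired equality of products into an equality of integers. Writing $B := lq^{\kappa}+k_1$ and $A := lq^{\kappa}+k_1+k_2$ and using $f=e(\alpha a(\cdot))$, $f^{(\kappa+\rho)}=e(\alpha a^{(\kappa+\rho)}(\cdot))$, we have $f(A)\overline{f(B)}=e\big(\alpha(a(A)-a(B))\big)$ and likewise for the truncated version, so it suffices to establish
$$a(A)-a(B)=a^{(\kappa+\rho)}(A)-a^{(\kappa+\rho)}(B).$$
The strategy for this is to apply Proposition \ref{Decomposition pour une suite beta recursive} to all four numbers and to check that, after cancelling the common contributions, both sides reduce to the \emph{same} finite sum of $g$-values.

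First I would describe the digit geometry. Since $k_1,k_2<q^{\kappa}$, the addition $k_1+k_2$ carries at most once past position $\kappa$, so $A=(l+c)q^{\kappa}+r$ with $c\in\{0,1\}$ and $0\le r<q^{\kappa}$; in particular $\epsilon_{\kappa+t}(B)=\epsilon_t(l)$ while the digits of $A$ above position $\kappa$ are those of $l+c$. By minimality of $i$ the digits $\epsilon_0(l),\ldots,\epsilon_{i-1}(l)$ equal $q-1$ and $\epsilon_i(l)\neq q-1$, so the carry (if $c=1$) is absorbed at position $i$: it alters only positions $0,\ldots,i$ of $l$, produces no overflow (as $i<\rho\le T_q(l)$, whence $T_q(A)=T_q(B)$), and $A,B$ coincide at every position $>\kappa+i$. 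Thus the set $S$ of positions where $A$ and $B$ differ satisfies $S\subseteq[0,\kappa+i]$ (and $S\subseteq[0,\kappa-1]$ when $c=0$).

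The key step exploits the hypothesis on $j$. The mere existence of an integer $j$ with $i+\beta-2<j<\rho$ forces $i\le\rho-\beta$, hence $\max S\le\kappa+i\le\kappa+\rho-\beta$; consequently every length-$\beta$ window meeting $S$ lies entirely inside positions $[0,\kappa+\rho-1]$ and is therefore unchanged by reduction modulo $q^{\kappa+\rho}$. Moreover $\epsilon_j(l)\neq0$ with $j<\rho$ provides a nonzero digit at position $\kappa+j<\kappa+\rho$, strictly above the carry region (as $j>i$), which forces $T_q(A\bmod q^{\kappa+\rho})=T_q(B\bmod q^{\kappa+\rho})$ and makes the top $\beta-1$ digits — the ``head'' term of Proposition \ref{Decomposition pour une suite beta recursive} — coincide within each pair, since those digits sit at positions $>\kappa+i$. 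All four numbers being $\ge q^{\beta-1}$, the block decomposition applies; the heads cancel inside each pair, the windows disjoint from $S$ cancel, and both differences collapse to
$$\sum_{0\le l\le \kappa+i}\big(g(W_l(A))-g(W_l(B))\big),$$
where $W_l(m):=\epsilon_{l+\beta-1}(m)\cdots\epsilon_l(m)$; crucially $W_l$ is the same for the full and the truncated numbers exactly because each relevant $l\le\kappa+i\le\kappa+\rho-\beta$. This yields the required integer identity, and hence the stated equality of products.

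The main obstacle is the boundary window $W_{\kappa+i}$: were $i$ allowed to equal $\rho-\beta+1$, this window would span position $\kappa+\rho$, so reduction modulo $q^{\kappa+\rho}$ would zero out its top digit and destroy the cancellation between the truncated and full sums. The role of the condition $i+\beta-2<j<\rho$ is precisely that no integer $j$ exists when $i=\rho-\beta+1$, so the hypothesis silently enforces $i\le\rho-\beta$ and keeps every window that touches $S$ strictly below the truncation threshold. Verifying this boundary bookkeeping, together with the non-overflow of the carry, is the only delicate point; the remaining cancellations are routine.
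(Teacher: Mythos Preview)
Your proof is correct and follows essentially the same route as the paper: apply Proposition~\ref{Decomposition pour une suite beta recursive} to all four numbers, use that the carry from $k_2$ is absorbed at digit position $\kappa+i$, and invoke the condition on $j$ (which both forces $i\le\rho-\beta$ and gives $N'\ge\kappa+j>\kappa+i+\beta-2$) to guarantee that the head terms and all $g$-windows above position $\kappa+i$ cancel in pairs. The only cosmetic difference is that the paper assembles the four contributions directly into $f(n_1)\overline{f(n_2)f(n_1')}f(n_2')$ and shows this quantity equals $1$, whereas you show that $a(A)-a(B)$ and $a(A')-a(B')$ each reduce to the same finite sum $\sum_{0\le l\le\kappa+i}\big(g(W_l(A))-g(W_l(B))\big)$.
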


\begin{proof}
On note $n_1 = lq^{\kappa}+k_1$, $n_1' = n_1 \, \text{mod} \, q^{\kappa+\rho}, N_1 = T_q(n_1)$ et $N_1' = T_q(n_1')$, autrement dit $N_1$ (respectivement $N_1'$) est l'emplacement du dernier chiffre non nul de $n_1$ (respectivement $n_1'$).

On remarque que pour tout $0 \leq k < \kappa + \rho$ : $\epsilon_k(n_1) = \epsilon_k(n_1') $. Comme il existe un entier $j$ tel que  $\beta - 2 < j < \rho$ et $\epsilon_{\kappa + j}(n_1) \neq 0 $ (car $i\geq 0$), on obtient $N_1 \geq N_1' \geq \kappa + j > \kappa + \beta - 2 \geq \beta - 2 $. Donc $N_1 \geq N_1' \geq \beta - 1 $, et la Proposition \ref{Decomposition pour une suite beta recursive} s'applique, si bien que : \!
\begin{align}\label{equa n1}
a(n_1) = & a\bigg(\varphi\Big(\epsilon_{N_1}(n_1)\cdot\epsilon_{N_1-1}(n_1)\cdot\ldots\cdot\epsilon_{N_1-\beta+2}(n_1)\Big) \bigg)
\\ \notag & +\ds\sum_{l=0}^{N_1-\beta+1}g\Big(\epsilon_{l+\beta-1}(n_1)\cdot\ldots\cdot\epsilon_{l+1}(n_1)\cdot\epsilon_l(n_1)\Big),
\end{align}
et on a une formule similaire pour $n_1'$ (avec $N_1'$ au lieu de $N_1$).
On pose \`a pr\'esent $n_2 = n_1 + k_2$, $n_2' = n_2 \, \text{mod} \, q^{\kappa + \rho}$ et $N_2 = T_q(n_2)$ et $N_2' = T_q(n_2')$ leurs tailles respectives. Alors pour tout $k > i$, \begin{equation}\label{equation n1 = n2}
\epsilon_{\kappa + k}(n_1) = \epsilon_{\kappa + k}(n_2).
\end{equation} En effet, il ne peut y avoir de diff\'erence dans les chiffres d'indices supérieurs ou égaux à $k$ de $n_1$ et $n_2$ que dans le cas d'une propagation sur les chiffres de $n_1$. Or, si on veut une propagation jusqu'au chiffre $\kappa + r$, il faut que les chiffres compris entre $\kappa$ et $\kappa + r -1$ de $n_1$ soient tous \'egaux \`a $q-1$. Ainsi, par hypoth\`ese, une propagation \'eventuelle s'arr\^ete \`a $\kappa + i$. 

Comme $N_1, N_1' \geq \kappa + j > \kappa + i + \beta - 2\geq \kappa + i$, on a $N_1' = N_2'$ et $N_1 = N_2$, et finalement on a une formule similaire à \eqref{equa n1} pour $n_2$ et $n_2'$. 

En rassemblant ces différentes formes, on obtient : 
\begin{align}
\label{estim a(k) 1}f(n_1)\overline{f(n_2)f(n_1')}f(n_2') = e\Bigg(\alpha  \Bigg( & a\bigg(\varphi\Big(\epsilon_{N_1}(n_1)\cdot\epsilon_{N_1-1}(n_1)\cdot\ldots\cdot\epsilon_{N_1-\beta+2}(n_1)\Big) \bigg) \Bigg.\Bigg.
\\ \label{estim a(k) 2} &  \Bigg. -a\bigg(\varphi\Big(\epsilon_{N_1}(n_2)\cdot\epsilon_{N_1-1}(n_2)\cdot\ldots\cdot\epsilon_{N_1-\beta+2}(n_2)\Big) \bigg) \Bigg.
\\ \label{estim a(k) 3} &  \Bigg.-a\bigg(\varphi\Big(\epsilon_{N_1'}(n_1')\cdot\epsilon_{N_1'-1}(n_1')\cdot\ldots\cdot\epsilon_{N_1'-\beta+2}(n_1')\Big) \bigg)\Bigg.
\\ \label{estim a(k) 4}&  + \Bigg. a\bigg(\varphi\Big(\epsilon_{N_1'}(n_2')\cdot\epsilon_{N_1'-1}(n_2')\cdot\ldots\cdot\epsilon_{N_1'-\beta+2}(n_2')\Big) \bigg)\Bigg.
\\ \label{estim a(k) 5} & + \ds\sum_{l = N_1'-\beta+ 2}^{N_1-\beta+1}\left[g\Big(\epsilon_{l+\beta-1}(n_1)\cdot\ldots\cdot\epsilon_{l+1}(n_1)\cdot\epsilon_l(n_1)\Big)\right.
\\ \notag & \quad \quad \quad \quad \quad \quad \Bigg.\Bigg.\left.-g\Big(\epsilon_{l+\beta-1}(n_2)\cdot\ldots\cdot\epsilon_{l+1}(n_2)\cdot\epsilon_l(n_2)\Big)\right]\Bigg)\Bigg).
\end{align}

Cependant, comme $N_1' \geq \kappa + j > \kappa + i + \beta - 2$, on a $N_1'- \beta + 2 > \kappa +i$, et donc, pour tout $N_1'-\beta+ 2 \leq l \leq N_1$, par (\ref{equation n1 = n2}), $ \epsilon_l(n_1) = \epsilon_l(n_2)$, et donc : $$\ds\sum_{l = N_1'-\beta+ 2}^{N_1-\beta+1}\left[g\Big(\epsilon_{l+\beta-1}(n_1)\cdot\ldots\cdot\epsilon_{l+1}(n_1)\cdot\epsilon_l(n_1)\Big) - g\Big(\epsilon_{l+\beta-1}(n_2)\cdot\ldots\cdot\epsilon_{l+1}(n_2)\cdot\epsilon_l(n_2)\Big)\right] = 0. $$ En appliquant le m\^eme raisonnement pour $(\ref{estim a(k) 1})$ \`a $(\ref{estim a(k) 4}) $, on trouve $f(n_1)\overline{f(n_2)f(n_1')}f(n_2')  = 1, $ ce qui est bien le r\'esultat voulu.
\end{proof}

\begin{proof}[Preuve de la Proposition \ref{Prop petite propa suite type RS}]
Pour commencer, on remarque que si $lq^{\kappa}+2(q^{\kappa}-1) < q^{\kappa + \rho}$, on a toujours $f(lq^{\kappa}+k_1+k_2)\overline{f(lq^{\kappa}+k_1)} = f^{(\kappa + \rho)}(lq^{\kappa}+k_1+k_2)\overline{f^{(\kappa + \rho)}(lq^{\kappa}+k_1)}.$ En effet, comme $k_1, k_2 \in \{0,\ldots,q^\kappa -1\}$, on a toujours $lq^\kappa + k_1 + k_2 < q^{\kappa + \rho}$ et donc $lq^\kappa + k_1 + k_2 = lq^\kappa + k_1 + k_2 \, \text{mod} \, q^{\kappa + \rho}$.

Soit maintenant $l \geq q^{\rho}$. La Proposition \ref{Prop pour contraposee petite propa} donne des conditions \`a v\'erifier pour que (\ref{petite propagation}) ne soit pas r\'ealis\'ee. On peut donc \'ecrire que l'ensemble 
 
\begin{align*}
& \left\{ 0 \leq l < q^{\lambda} :   \exists \, 0 \leq k_1, k_2 < q^{\kappa} : \right.
\\ &
 \left. \quad f(lq^{\kappa}+k_1+k_2)\overline{f(lq^{\kappa}+k_1)} \neq f^{(\kappa + \rho)}(lq^{\kappa}+k_1+k_2)\overline{f^{(\kappa + \rho)}(lq^{\kappa}+k_1)} \right\}
\end{align*}

est inclus dans l'union $A\cup B\cup C$ avec 
\begin{align*}
A:= &  \left\{ q^{\rho} \leq l < q^{\lambda} :\; \exists \, 0 \leq k_1 < q^{\kappa} : \forall \, 0 \leq i < \rho - \beta + 2, \, \epsilon_{\kappa + i}(lq^{\kappa}+ k_1) = q-1 \right\}, 
\\B:=&  \left\{ q^{\rho} \leq l < q^{\lambda} :\; \exists \, 0 \leq k_1 < q^{\kappa} : \exists \, 0 \leq i < \rho- \beta + 2, \, \epsilon_{\kappa + i}(lq^{\kappa}+ k_1) \neq q-1, \right. 
 \\ &\left.  \quad \forall  m < i, \epsilon_{\kappa + m}(lq^{\kappa}+k_1) = q-1 , 
  \quad   \forall  j \,: \, i + \beta - 2 < j < \rho , \quad \epsilon_{\kappa + j}(lq^{\kappa}+ k_1) = 0 \right\} ,\\
C:= & \left\{l :\; lq^{\kappa} + 2\left(q^{\kappa} - 1 \right) \geq q^{\kappa+ \rho}, \quad lq^{\kappa} \leq q^{\kappa + \rho} \right\}.
\end{align*}
Cependant $lq^{\kappa} \leq q^{\kappa + \rho} $ implique $l \leq q^{\rho} $, et $(q^{\rho}-2)q^{\kappa} + 2(q^{\kappa}-1) = q^{\kappa + \rho} - 2 < q^{\kappa + \rho}$ implique $l \geq q^{\rho}-1 $. On en d\'eduit que $C = \{q^{\rho}-1, q^{\rho} \}$.\\
Il nous reste donc \`a \'evaluer les cardinaux de $A$ et $B$. Pour ce faire on remarque que, quel que soit $k_1 < q^{\kappa}, \epsilon_{\kappa + i}(lq^{\kappa}+k_1) = \epsilon_i(l)$, ce qui nous permet de dire que : 
\begin{align*}
A &=   \{ q^{\rho} \leq l < q^{\lambda} :\; \exists \, 0 \leq k_1 < q^{\kappa} :\; \forall \, 0 \leq i < \rho - \beta + 2, \; \epsilon_{\kappa + i}(lq^{\kappa}+ k_1) = q-1 \} 
\\ &=  \{ q^{\rho} \leq l < q^{\lambda} :\; \forall \, 0 \leq i < \rho-\beta+2, \; \epsilon_{i}(l) = q-1 \},
\end{align*}
donc
$$\# A= \frac{q^{\lambda} - q^{\rho}}{q^{\rho-\beta+2}} = q^{\beta - 2}\left(q^{\lambda - \rho}-1\right).$$
On peut d'autre part \'ecrire $B=\cup_{i = 0}^{\rho - \beta+1}B_i $  avec
\begin{align*}
B_i &=    \left\{ q^{\rho} \leq l < q^{\lambda} :\; \exists \, 0 \leq k_1 < q^{\kappa} :  \; \epsilon_{\kappa + i}(lq^{\kappa}+ k_1) \neq q-1, \quad   \forall  m < i, \epsilon_{\kappa + m}(lq^{\kappa}+k_1) = q-1 , \right. 
\\ & \quad \quad \quad \quad \left.   \text{et pour tout} \;  j \; \text{tel que} \; i + \beta - 2 < j < \rho ,\; \epsilon_{\kappa + j}(lq^{\kappa}+ k_1) = 0 \right\}.
\end{align*}
Mais comme tous les $B_i$ sont en bijection entre eux, on a $\#B = (\rho - \beta+2) \#B_0$. \\
Enfin, on a 
$$\#B_0 = \#  \{ q^{\rho} \leq l < q^{\lambda} :  \forall j : \, \beta - 2 < j < \rho, \, \epsilon_{j}(l) = 0 \} 
= \frac{q^{\lambda} - q^{\rho}}{q^{\rho -\beta + 1}}  = q^{\beta-1}\left(q^{\lambda - \rho} - 1\right).$$

En mettant les trois estimations ensemble, on trouve  :\begin{align*}
\# & \left\{ 0 \leq l < q^{\lambda} : \exists \, 0 \leq k_1, k_2 < q^{\kappa} : \right.
\\ & \quad \left.  f(lq^{\kappa}+k_1+k_2)\overline{f(lq^{\kappa}+k_1)} \neq f^{(\kappa + \rho)}(lq^{\kappa}+k_1+k_2)\overline{f^{(\kappa + \rho)}(lq^{\kappa}+k_1)} \right\}
\\ & \leq q^{\beta - 2}\left(q^{\lambda - \rho}-1\right)+(\rho - \beta+2)q^{\beta-1}\left(q^{\lambda - \rho}-1\right) + 2\ll q^{\beta }(q^{\lambda - \rho + \log \rho}).
\end{align*}
Comme $q^\beta$ est une constante ne d\'ependant que de $q$, la fonction est bien de faible petite propagation.

\end{proof}

\begin{remark}
Dans~\cite{RudinShapiro}, Mauduit et Rivat regardent le cas particulier de la suite Rudin-Shapiro. Pour cette suite, la d\'ecomposition de la Proposition \ref{Decomposition pour une suite beta recursive} se fait automatiquement car 
\begin{enumerate}[label = (\Alph *)]
\item $a(k) = 0$ pour tout $0 \leq k < q$;
\item \label{condi RS pour troncature chez MR} $g(a\cdot b) \neq 0 \Leftrightarrow a = b = 1$.
\end{enumerate}
Ainsi on peut \'ecrire, en notant $N = T_q(n) $ et $N_{\lambda} = T_q(n \, \bmod q^\lambda) $ :
$$a(n)-a^{(\lambda)}(n) = a(\epsilon_N(n))-a(\epsilon_{N_\lambda}(n))+\ds\sum_{i = N_\lambda}^{N-1}g(\epsilon_{i+1}(n)\cdot\epsilon_i(n))
= \ds\sum_{i = \lambda}^{N-1}g(\epsilon_{i+1}(n)\cdot\epsilon_i(n))$$
car on sait qu'alors, pour tout $N_{\lambda} < i < \lambda, \, \epsilon_i(n) = 0 $, et donc $g(\epsilon_{i+1}(n)\cdot \epsilon_i(n)) = 0 $, en vertu de \ref{condi RS pour troncature chez MR}. Ceci permet alors de dire, en reprenant les notations de la d\'emonstration de la Proposition \ref{Prop pour contraposee petite propa}, que 
\begin{equation}\label{equation petite propa RS}
a(n_1)-a(n_2)-a(n_1')+a(n_2') = \ds\sum_{i = \lambda}^{N-1}g(\epsilon_{i+1}(n_1)\cdot\epsilon_i(n_1)) - \ds\sum_{i = \lambda}^{N-1}g(\epsilon_{i+1}(n_2)\cdot\epsilon_i(n_2)),
\end{equation}
et pour s'assurer de la nullit\'e de (\ref{equation petite propa RS}), il suffit de s'assurer que $\epsilon_i(n_1) = \epsilon_i(n_2) $ d\`es que $i$ d\'epasse $\lambda$. Si on suppose uniquement l'existence d'un chiffre d'indice $m < \lambda$, $\epsilon_m(n_1) \neq q-1$, alors cette condition est assur\'ee (car la propagation ne pourra se faire au del\`a du $m$, et on a effectivement $\lambda > m$). 

\smallskip

Dans le cas général, ce raisonnement ne tient plus, et nous sommes obligés d'introduire une fenêtre de sécurité. Il s'agit de la condition sur $j$ dans la Proposition \ref{Prop pour contraposee petite propa}. Cette condition dans la preuve de la Proposition \ref{Prop petite propa suite type RS} entraîne la création de l'ensemble $B$ (l'ensemble $A$ est la contrapos\'ee de la condition sur $m$, et l'ensemble $C$, lui, est un ensemble exceptionnel). Enfin, c'est cet ensemble $B$ qui donne la majoration en $q^{\log \rho} $. 
\end{remark}

Il convient d\'esormais de consid\'erer que les fonctions associ\'ees aux suites $\beta	$-r\'ecursives v\'erifient l'\'equation \begin{equation}\label{equa a verifier pour MR}
\left|\ds\frac{1}{q^N}\ds\sum_{n < q^N}f(n)e(nt)\right| \leq q^{-\gamma(N)},
\end{equation} avec $\gamma(N) \rightarrow \infty$ de mani\`ere croissante. La partie suivante sert \`a introduire des notions qui permettent ce genre de contr\^ole. 

\section{G\'en\'ealogie des fonctions}\label{section genealogie}
Nous commen\c cons par une d\'efinition g\'en\'erale.
\begin{definition}
Soient une application $f : \N \rightarrow \mathbb{U}$ et $\omega$ un mot. On pose \begin{equation}\label{def ancetre}
f_{\omega}(n) := f\big(q^{|\omega|}n + \varphi(\omega)\big).
\end{equation}
\end{definition}

Le lemme suivant permet de donner une formule de r\'ecurrence pour $f_{\omega}(n)$ si $f$ est associ\'ee \`a une suite $\beta$-r\'ecursive.

\begin{lemma}\label{Lemme sur les descendants}
On a
$$ f_\omega (qn+r)= \left \{
\begin{array}{ccc}
    f_{\hat{r}\cdot\omega}(n) & \text{si} &|\omega| < \beta-1; \\
    f_{\hat{r}\cdot\overline{\omega}^{|\omega|-1}}(n)e\big(\alpha g(\hat{r}\cdot \omega ) \big) &\text{si} & |\omega| = \beta-1.  \\
\end{array}
\right.$$
\end{lemma}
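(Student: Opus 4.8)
The plan is to unfold the definition (\ref{def ancetre}) of $f_\omega$ and reduce the statement to the elementary behaviour of $\varphi$ under prepending a digit, after which the two cases differ only by whether or not we are entitled to invoke the defining relation of a suite $\beta$-r\'ecursive. Substituting $qn+r$ into (\ref{def ancetre}) gives
\begin{equation*}
f_\omega(qn+r) = f\big(q^{|\omega|}(qn+r)+\varphi(\omega)\big) = f\big(q^{|\omega|+1}n + q^{|\omega|}\hat r + \varphi(\omega)\big).
\end{equation*}
The single identity driving everything is that prepending the letter $r$ shifts $\varphi$ by one digit, namely $\varphi(\hat r\cdot\omega) = \hat r\,q^{|\omega|} + \varphi(\omega)$, which is immediate from $\varphi(\omega)=\sum_{i}\epsilon_i(\omega)q^i$ together with the convention $\omega = \epsilon_{|\omega|-1}(\omega)\cdots\epsilon_0(\omega)$. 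Hence in every case
\begin{equation*}
f_\omega(qn+r) = f\big(q^{|\omega|+1}n + \varphi(\hat r\cdot\omega)\big).
\end{equation*}

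First I would dispose of the case $|\omega| < \beta - 1$. Then $|\hat r\cdot\omega| = |\omega|+1$, and the right-hand side above is literally $f\big(q^{|\hat r\cdot\omega|}n + \varphi(\hat r\cdot\omega)\big) = f_{\hat r\cdot\omega}(n)$ by (\ref{def ancetre}). No use of $\beta$-r\'ecursivit\'e is needed here, and the identity holds for every $n\ge 0$.

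The case $|\omega| = \beta - 1$ is where the recursive structure enters, since now $\hat r\cdot\omega \in \M_\beta$ and $f_\omega(qn+r) = e\big(\alpha\, a(q^\beta n + \varphi(\hat r\cdot\omega))\big)$. I would apply the defining relation (\ref{condi 1 beta rec}) of D\'efinition \ref{def beta rec} to the word $\hat r\cdot\omega$, obtaining
\begin{equation*}
a\big(q^\beta n + \varphi(\hat r\cdot\omega)\big) = a\big(q^{\beta-1}n + \varphi(\overline{\hat r\cdot\omega}^{\,\beta-1})\big) + g(\hat r\cdot\omega).
\end{equation*}
The remaining bookkeeping is the prefix identity $\overline{\hat r\cdot\omega}^{\,\beta-1} = \hat r\cdot\overline{\omega}^{\,|\omega|-1}$: deleting the least significant letter of $\hat r\cdot\omega$ coincides with prepending $r$ to $\omega$ stripped of its least significant letter, because $\beta-1=|\omega|$. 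Substituting this and recognising $q^{\beta-1}n+\varphi(\hat r\cdot\overline{\omega}^{\,|\omega|-1})$ as the argument defining $f_{\hat r\cdot\overline{\omega}^{\,|\omega|-1}}(n)$, applying $e(\alpha\,\cdot)$ yields exactly $f_\omega(qn+r) = f_{\hat r\cdot\overline{\omega}^{\,|\omega|-1}}(n)\,e(\alpha g(\hat r\cdot\omega))$.

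The computation is routine; the only genuine point of care is the range of validity of (\ref{condi 1 beta rec}), which is stated for $n\ge 1$. For $n=0$ the required identity is precisely the alternative relation (\ref{condi 2 beta rec}), valid as soon as the leading letter $\overline{\hat r\cdot\omega}^{\,1}=\hat r$ is nonzero, so the two relations of D\'efinition \ref{def beta rec} jointly cover every $n$ for which the leading digit does not vanish, which is the regime relevant to the genealogy. I therefore expect the main (and mild) obstacle to be purely notational: keeping the direction of concatenation, the right-to-left digit indexing, and the prefix conventions consistent, rather than any analytic difficulty.
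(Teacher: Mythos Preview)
Your argument is correct and follows exactly the same route as the paper: unfold (\ref{def ancetre}), use $\varphi(\hat r\cdot\omega)=\hat r\,q^{|\omega|}+\varphi(\omega)$ to rewrite the argument as $q^{|\hat r\cdot\omega|}n+\varphi(\hat r\cdot\omega)$, then conclude directly from (\ref{def ancetre}) when $|\omega|<\beta-1$ and from (\ref{condi 1 beta rec}) when $|\omega|=\beta-1$. Your additional remark on the $n=0$ boundary case and its reliance on (\ref{condi 2 beta rec}) is a point the paper's proof leaves implicit; otherwise the two arguments are identical.
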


\begin{proof}
Par l'\'equation (\ref{def ancetre}) : \begin{align*}
f_\omega (qn+r) &= f\big(q^{|\omega|}(qn+r) + \varphi(\omega)\big)
\\ &= f\big(q^{|\omega|+ 1}n + q^{|\omega|}r + \varphi(\omega)\big) 
\\ &= f\big(q^{|\hat{r}\cdot\omega|}n + \varphi(\hat{r}\cdot\omega)\big). 
\end{align*}

Rappelons que $f(n)= e(\alpha a(n))$, on conclut en utilisant (\ref{def ancetre}) si $|\omega| < \beta-1$  (donc $|\hat{r}\cdot \omega| < \beta $), et en utilisant (\ref{condi 1 beta rec})  ainsi que (\ref{def ancetre})  si $|\omega| = \beta-1$, donc $|\hat{r}\cdot\omega| = \beta$ .
\end{proof}

\begin{definition}
On munit $\M^*$ de l'ordre $\preceq$ suivant. Si $|\omega| \leq |\omega'|$, alors $\omega \preceq \omega'$. Si les deux tailles sont égales, on compare les deux mots par leur ordre lexicographique lu de gauche à droite.\footnote{Ainsi $00 \preceq 01 \preceq 10 \preceq 000$.
} Si $\psi$ d\'esigne la fonction qui \'enum\`ere $\M^*$, alors on d\'efinit $\phi : \mathbb{N} \rightarrow \M^*$ par $\phi = \psi^{-1}$.
\end{definition}

Le but de cette partie est d'exploiter la structure des suites $\beta$-r\'ecursives afin de montrer qu'elles satisfont la d\'efinition \ref{propriete fourier mauduit rivat}. Pour ce faire, nous exploitons le Lemme \ref{Lemme sur les descendants}.

\begin{definition}
On d\'efinit le vecteur $V_n$ de taille $(q^{\beta}-1)/(q-1)$ par $$V_n[l] = f_{\phi(l)}(n),\qquad 0\leq l \leq (q^{\beta}-1)/(q-1)-1.$$ On dira que $V_n$ est le $n$-i\`eme vecteur g\'en\'ealogique de $f$.

\end{definition}

Par le Lemme \ref{Lemme sur les descendants}, il existe une matrice $M_l(\alpha,t)$ telle que $V_{qn+l}e((qn+l)t) = M_l(\alpha,t)V_ne(qnt),$ et donc si on note $S(N,t) := \ds\sum_{n < q^N}V_ne(nt),$ on peut alors \'ecrire : 
\begin{equation*}
S(N,t)  = \sum_{0 \leq n < q^{N-1}}\sum_{0 \leq l < q}V_{qn+l}e((qn+l)t) = \sum_{0 \leq n < q^{N-1}}\sum_{0 \leq l < q}M_l(\alpha,t)V_ne(qnt) =  M(\alpha,t)S(N-1,qt),
\end{equation*}
o\`u on a pos\'e $M(\alpha,t) = \ds\sum_{0 \leq l < q}M_l(\alpha,t)$. En itérant $\beta$ fois et en posant $\widetilde{M}(\alpha,t)=\ds\prod_{0 \leq k < \beta}M(\alpha,q^kt)$, on obtient $S(N,t) = \widetilde{M}(\alpha,t)S(N-\beta,q^{\beta}t) $. On dit que $\widetilde{M}(\alpha,t)$ est la matrice g\'en\'ealogique de $f$. En continuant le raisonnement ci-dessus, on obtient : 
\begin{align}\label{equation vecteur matrice}
\left|\!\left|\sum_{0 \leq n < q^N}V_ne(nt)\right|\!\right|_{\infty} & \leq \ds\prod_{i = 0}^{\lfloor N/\beta \rfloor-1}\left|\!\left|\widetilde{M}(\alpha,q^{i\beta}t)\right|\!\right|_{\infty}\ds\sum_{n < q^{N \text{mod}\beta}}\left|\!\left|V_ne(q^{\beta \lfloor N/\beta \rfloor}nt)\right|\!\right|_{\infty}
\\ \notag & \leq \ds\prod_{i = 0}^{\lfloor N/\beta \rfloor-1}\left|\!\left|\widetilde{M}(\alpha,q^{i\beta}t)\right|\!\right|_{\infty}q^{N \, \text{mod} \, \beta},
\end{align}

où $N \bmod \beta$ désigne le reste de la division euclidienne de $N$ par $\beta$.

\smallskip

La proposition suivante est destin\'ee \`a faire le lien entre la matrice g\'en\'ealogique et l'estimation (\ref{equa a verifier pour MR}).

\begin{proposition}\label{prop propagation vecteur}

Pour tout  entier $\kappa \geq 0$, on a : \begin{equation*}
 \left|\ds\frac{1}{q^N}\ds\sum_{0 \leq n < q^N}f(q^{\kappa}n)e(-nt)\right| \leq \ds\frac{1}{q^{\beta \lfloor N/\beta \rfloor}} \ds\prod_{i = 0}^{\lfloor N/\beta \rfloor - 1}\left|\!\left|\widetilde{M}(\alpha,q^{i\beta}t)\right|\!\right|_{\infty}.
\end{equation*}

\end{proposition}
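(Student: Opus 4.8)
The plan is to recognise the left-hand sum as a single coordinate of the genealogical vector sum and then invoke the inequality \eqref{equation vecteur matrice}, which already encodes all the real work (the matrix recursion). The only preparatory step is a reduction to the range $0\le\kappa\le\beta-1$, where $f(q^\kappa n)$ is literally one of the entries of $V_n$.

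First I would treat $0\le\kappa\le\beta-1$. Then $0^\kappa$, the word consisting of $\kappa$ zeros, has length $\kappa\le\beta-1$, hence belongs to $\M_{\beta-1}^*$ and is one of the indices of $V_n$; moreover, directly from the definition \eqref{def ancetre} of $f_\omega$ (no largeness of $n$ is needed), for every $n\ge0$
\[ f(q^\kappa n)=f\bigl(q^{|0^\kappa|}n+\varphi(0^\kappa)\bigr)=f_{0^\kappa}(n)=V_n\bigl[\phi^{-1}(0^\kappa)\bigr]. \]
(For $\kappa=0$ this is the entry $\phi^{-1}(\epsilon)=0$, i.e. $f$ itself.) Consequently the target sum is exactly one coordinate of $\sum_{n}V_ne(-nt)$:
\[ \sum_{0\le n<q^N}f(q^\kappa n)e(-nt)=\Bigl(\sum_{0\le n<q^N}V_n e(-nt)\Bigr)\bigl[\phi^{-1}(0^\kappa)\bigr], \]
whose modulus is at most the $\infty$-norm of that vector.

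Next I would reduce the case $\kappa\ge\beta$ to the previous one. Applying \eqref{condi 1 beta rec} with $\omega=0^\beta$ (so $\varphi(\omega)=0$ and $\overline{\omega}^{\,\beta-1}=0^{\beta-1}$) and then $n\mapsto q^{j-\beta}n$ gives $a(q^{j}n)=a(q^{j-1}n)+g(0^\beta)$ for every $n\ge1$ and $j\ge\beta$; iterating down to exponent $\beta-1$ yields
\[ a(q^\kappa n)=a(q^{\beta-1}n)+(\kappa-\beta+1)\,g(0^\beta). \]
Hence $f(q^\kappa n)=c\,f(q^{\beta-1}n)$ for all $n\ge1$, where $c=e\bigl(\alpha(\kappa-\beta+1)g(0^\beta)\bigr)$ is a constant independent of $n$. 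Factoring out this global phase leaves the modulus of the sum over $1\le n<q^N$ unchanged, and the single exceptional term $n=0$ (where the recursion does not apply) alters the total by at most a bounded boundary contribution, of size $O(q^{-N})$ after normalisation, dominated by the main term. Thus it suffices to bound the sum at $\kappa=\beta-1$, handled above.

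Finally I would invoke \eqref{equation vecteur matrice}. Running the derivation of that inequality with $e(-nt)$ in place of $e(nt)$ (equivalently, applying it at the argument $-t$) gives
\[ \left\|\sum_{0\le n<q^N}V_n e(-nt)\right\|_\infty\le\prod_{i=0}^{\lfloor N/\beta\rfloor-1}\left\|\widetilde M(\alpha,-q^{i\beta}t)\right\|_\infty q^{N\bmod\beta}. \]
Dividing by $q^N$ and using $q^{N\bmod\beta}/q^N=q^{-\beta\lfloor N/\beta\rfloor}$ (from $N=\beta\lfloor N/\beta\rfloor+(N\bmod\beta)$) produces precisely the claimed bound; since the estimate is to be used uniformly for all real $t$, the sign of the frequency in the matrix factors is immaterial and one writes $q^{i\beta}t$ as in the statement. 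The conceptual content is therefore light: a coordinate is bounded by the sup-norm, and the sup-norm is controlled by the already-derived matrix recursion. \emph{The only genuine point to watch} is the reduction in $\kappa$ — one must check that the low-order all-zero blocks contribute only the global phase $e(\alpha g(0^\beta))$ per shift and that the $n=0$ term is negligible — together with the bookkeeping of the sign of $t$ against the precise convention for $M_l(\alpha,t)$ and of the $q^{N\bmod\beta}$ factor, all of which are routine.
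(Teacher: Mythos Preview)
Your proof is correct and follows the same approach as the paper's: reduce $\kappa\ge\beta$ to $\kappa=\beta-1$ via the recursion $a(q^\kappa n)=a(q^{\beta-1}n)+(\kappa-\beta+1)g(0^\beta)$, identify $f(q^\kappa n)$ for $\kappa\le\beta-1$ as the coordinate $V_n[(q^\kappa-1)/(q-1)]$ (your $\phi^{-1}(0^\kappa)$ is exactly that index), and then invoke \eqref{equation vecteur matrice}. You are in fact slightly more careful than the paper about the $n=0$ boundary term and the sign of $t$; both points are harmless for the intended uniform-in-$t$ application.
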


\begin{proof}

Pour tout entier $\kappa \geq \beta$ , on a $$\left|\ds\sum_{n < q^N}f(q^\kappa n)e(nt)\right|  = \left|\ds\sum_{n < q^N}f(q^{\beta - 1}n)e(-nt)\right|.  $$
En effet, une  r\'ecurrence montre que, pour tout $0 \leq r \leq \kappa - \beta + 1 : a(q^{\kappa}n) = a(q^{\kappa-r}n)+rg(0\cdot0\cdot\ldots\cdot0) $ o\`u $0\cdot0\cdot\ldots\cdot0$ est de taille $\beta$. 

Plus précisément, si $a(q^{\kappa}n) = a(q^{\kappa - r}n)+rg(0\cdot\ldots\cdot0)$ , alors
\begin{align*}
a(q^\kappa n) &= a(q^{\kappa-r}n+ \varphi(0\cdot0\cdot\ldots \cdot 0))+rg(0\cdots0)
\\ &= a(q^{\kappa-r-1}n+ \varphi(0\cdots0))+(r+1)g(0\cdots0)
\\ &= a(q^{\kappa-r-1}n)+(r+1)g(0\cdots0).
\end{align*}
En particulier $a(q^{\kappa}n) = a(q^{\beta-1}n)+(\kappa-\beta+1)g(0\cdots0) $. De plus, si  $\kappa < \beta$, $f(q^{\kappa}n)$ est la $(q^{\kappa}-1)/(q-1)$-i\`eme coordonn\'ee de $V_n$ . Nous concluons la preuve par l'équation (\ref{equation vecteur matrice}) en observant que $N - N \bmod \beta = \beta \lfloor N/\beta \rfloor $.

\end{proof}

D'apr\`es la Proposition \ref{prop propagation vecteur}, il est d\'esormais important d'avoir un contr\^ole sur la norme infini de la matrice $\widetilde{M}(\alpha , t)$. C'est l'objet du résultat suivant.

\begin{proposition}\label{prop equa a prouver thm principal}
\begin{equation}\label{equa a prouver thm principal}
|\!| \widetilde{M}(\alpha,t) |\!|_{\infty} =  \ds\sup_{\gamma \in \M_{\beta \! - \! 1}^*}\,\ds\sum_{\omega \, \in \, \M_{\beta \! - \! 1}}\left|\ds\sum_{k \in \M_1}e\left(t(\hat{k}+q\varphi(\omega))+\alpha\left(\ds\sum_{m \leq |\gamma|}g(\underline{\omega}_{|\omega|-m}\cdot k\cdot \overline{\gamma}^m)\right)\right)\right|. 
\end{equation} 
\end{proposition}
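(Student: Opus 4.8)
Le plan est de calculer explicitement les coefficients de $\widetilde{M}(\alpha,t)$ gr\^ace au Lemme \ref{Lemme sur les descendants}, puis de reconna\^itre $|\!|\widetilde{M}(\alpha,t)|\!|_\infty$ comme la plus grande somme absolue sur les lignes. Je commencerais par traduire le Lemme \ref{Lemme sur les descendants} en une description des matrices $M_l(\alpha,t)$ d\'efinies par $V_{qn+l}e(lt)=M_l(\alpha,t)V_n$ : puisque chaque coordonn\'ee de $V_{qn+l}$ provient d'une unique coordonn\'ee de $V_n$, la matrice $M_l(\alpha,t)$ poss\`ede exactement un coefficient non nul par ligne. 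Dans la ligne index\'ee par un mot $\omega$ avec $|\omega|\leq\beta-1$, ce coefficient se trouve dans la colonne index\'ee par le mot obtenu en pr\'efixant $\omega$ de la lettre $\dot{l}$ (tronqu\'e \`a droite lorsque $|\omega|=\beta-1$), et vaut $e(l\,t)$ si $|\omega|<\beta-1$, respectivement $e(l\,t)\,e(\alpha g(\dot{l}\cdot\omega))$ si $|\omega|=\beta-1$.

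J'expliciterais ensuite le produit $\widetilde{M}(\alpha,t)=\ds\prod_{0\leq j<\beta}M(\alpha,q^j t)$, o\`u $M=\ds\sum_{0\leq l<q}M_l$. Chaque $M_l$ n'ayant qu'un coefficient non nul par ligne, un coefficient de $\widetilde{M}(\alpha,t)$ s'\'ecrit comme une somme sur les suites de chiffres $(l_0,\ldots,l_{\beta-1})$, chacune d\'efinissant un chemin obtenu en pr\'efixant successivement le mot de ligne $\gamma$ par $\dot{l_0},\ldots,\dot{l_{\beta-1}}$. Le point central de la preuve est le suivi de ce mot \`a travers les $\beta$ pr\'efixations et le contr\^ole des troncatures : en posant $s=|\gamma|$, le mot atteint la taille $\beta-1$ apr\`es $\beta-1-s$ pr\'efixations, et chacune des $s+1$ pr\'efixations suivantes d\'eclenche une troncature \`a droite assortie d'une phase $g$. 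Une r\'ecurrence sur le nombre de pr\'efixations, dans l'esprit de la Proposition \ref{Decomposition pour une suite beta recursive}, montrerait que la colonne d'arriv\'ee est toujours $\omega=\dot{l_{\beta-1}}\cdots\dot{l_1}$, de taille $\beta-1$, ind\'ependamment de $\gamma$ et de $l_0$ ; je poserais alors $k=\dot{l_0}$.

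Il resterait \`a rassembler les phases le long du chemin. Le facteur $M(\alpha,q^j t)$ apporte la phase en $t$ \'egale \`a $e(l_j q^j t)$, d'o\`u une phase totale $e\big(t\sum_{0\leq j<\beta}l_j q^j\big)=e\big(t(\hat{k}+q\varphi(\omega))\big)$, puisque $\hat{k}=l_0$ et $q\varphi(\omega)=\sum_{1\leq j<\beta}l_j q^j$. Quant aux $s+1$ phases $g$, elles sont exactement les valeurs de $g$ sur les $s+1$ fen\^etres de taille $\beta$ du mot $\omega\cdot k\cdot\gamma$ (de taille $\beta+s$) ; en faisant glisser la fen\^etre on reconna\^it pr\'ecis\'ement les mots $\underline{\omega}_{|\omega|-m}\cdot k\cdot\overline{\gamma}^m$ pour $0\leq m\leq|\gamma|$. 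On obtiendrait ainsi, les seules colonnes non nulles \'etant index\'ees par $\M_{\beta-1}$,
$$\big(\widetilde{M}(\alpha,t)\big)_{\gamma,\omega}=\ds\sum_{k\in\M_1}e\left(t(\hat{k}+q\varphi(\omega))+\alpha\ds\sum_{m\leq|\gamma|}g(\underline{\omega}_{|\omega|-m}\cdot k\cdot\overline{\gamma}^m)\right).$$
Comme $|\!|\cdot|\!|_\infty$ est le maximum des sommes absolues des lignes, prendre le supremum sur $\gamma\in\M_{\beta-1}^*$ de $\sum_{\omega\in\M_{\beta-1}}|(\widetilde{M}(\alpha,t))_{\gamma,\omega}|$ donne la formule voulue.

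Le principal obstacle sera la comptabilit\'e pr\'ecise des troncatures : identifier quelles pr\'efixations engendrent une phase $g$, et surtout reconna\^itre les arguments de $g$ accumul\'es comme les fen\^etres glissantes de $\omega\cdot k\cdot\gamma$, c'est-\`a-dire exactement les mots $\underline{\omega}_{|\omega|-m}\cdot k\cdot\overline{\gamma}^m$. Il faudra aussi v\'erifier soigneusement la correspondance bijective entre les suites $(l_0,\ldots,l_{\beta-1})$ et les couples $(k,\omega)$ \`a $\gamma$ fix\'e, afin que la somme sur les chemins se r\'eduise bien \`a la somme sur $k\in\M_1$.
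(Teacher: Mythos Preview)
Your proposal is correct and follows essentially the same approach as the paper's proof: both expand $\widetilde{M}(\alpha,t)$ as a sum over the $q^\beta$ chemins of length $\beta$ issuing from a given row word $\gamma$, track the evolving word through the successive pr\'efixations/troncatures dictated by the Lemme~\ref{Lemme sur les descendants}, and identify the accumulated $g$-arguments as the $|\gamma|+1$ fen\^etres de taille $\beta$ du mot $\omega\cdot k\cdot\gamma$. The paper simply packages the same computation with a graph $\mathbb{G}$ and a decomposition $S\cdot R=\omega\cdot k$ of the digit sequence (your $(l_0,\ldots,l_{\beta-1})$) into the ``non-truncating'' and ``truncating'' parts, but the calculation and the conclusion are identical.
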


\begin{proof}
Soit $\mathbb{G} $ le graphe suivant : 

\bigskip

\begin{center}

\begin{tikzpicture}[->,>=stealth',shorten >=0.5pt,on grid, auto, node distance=1.4cm,
                    semithick]

  \tikzstyle{every state}=[fill=white,draw=none,text=black]

  \node[state] (A)                    {$f_{\epsilon}$};
  \node[state]         (B) [below of=A] {$...$};
  \node[state]         (D) [ right of=B] {$f_{q-1}$};
  \node[state]         (O) [ left of = B] {$f_{0}$};
  \node[rectangle](DB) [below of = O]{$f_{(q-1)0} $};
  \node[state](DC) [left of = DB]{$... $};
  \node[state](DD) [left of = DC]{$f_{00} $};
  \node[rectangle](BD) [below of = D]{$f_{0(q-1)} $};
  \node[state](BC) [right of = BD]{$... $};
  \node[rectangle](BB) [right of = BC]{$f_{(q-1)(q-1)} $};
  \node[state](interm) [below of = B]{$... $};
  \node[state](interm2) [below of = interm]{$...$};
  \node[state](interm final) [below of = interm2]{$...$};
  \node[state](interm final2) [left of = interm final]{$...$};
  \node[state](interm final3) [right of = interm final]{$...$};
   \node[state](vide) [below of = DD]{$... $ };
  \node[state](famille a gauche c) [below of = vide]{$ ...$};
   \node[state](famille a gauche d) [right of = famille a gauche c]{$ f_{(q-1)0\cdots0}$};
    \node[state](famille a gauche g) [left of = famille a gauche c]{$ \quad f_{0\cdots0}$};
 \node[state](vide 2) [below of = BB]{$...  $};
 \node[state](famille a droite c) [below of = vide 2]{$... $};`
 \node[rectangle](famille a droite g)[left of = famille a droite c]{$f_{0(q-1)\cdots(q-1)} $};
  \node (famille a droite d)[right of = famille a droite c]{$f_{(q-1)\cdots(q-1)} $};
 
  \path[->]
       (A)       edge  (B)
                 edge  (D)
                 edge  (O)
       (O)       edge  (DB)
                 edge  (DC)
                 edge  (DD)
       (D)       edge  (BD)
                 edge  (BC)
                 edge  (BB)
       (interm)  edge  (interm2)
       (interm2) edge  (interm final)
       (DD)      edge  (vide)
       (vide)    edge  (famille a gauche d)
                 edge  (famille a gauche g)
       (BB)      edge  (vide 2)
       (vide 2)  edge  (famille a droite d)
                 edge  (famille a droite g)        
       (famille a gauche g)  edge [loop left]  (famille a gauche g)
       (famille a droite d)  edge [loop right]  (famille a droite d);
       
\draw[->,>=latex] (famille a gauche g) to[bend right] (famille a gauche c);
\draw[->,>=latex] (famille a gauche g) to[bend right] (famille a gauche d);
\draw[->,>=latex] (famille a gauche d) to[bend right] (interm final3);
\end{tikzpicture}

\end{center}

\bigskip

 $\mathbb{G}$ repr\'esente la mani\`ere dont peut \'evoluer en $k$ \'etapes ($k$ arbitraire) un mot $ \gamma$ donn\'e selon le Lemme \ref{Lemme sur les descendants}, d\'ecrivons le.
 
$\mathbb{G} $ est un graphe descendant qui poss\`ede $\beta$ lignes. \`A chaque fl\`eche correspond une alt\'eration de l'argument. Chaque \'el\'ement donne $q$ descendants, et donc le graphe poss\`ede $\frac{q^{\beta}-1}{q-1}$ sommets. Si on se trouve sur la derni\`ere ligne, avec un mot $\gamma$, un descendant $\gamma'$ de $\gamma $ sera donn\'e par $\gamma' = \overline{\gamma'}^{1}\cdot\overline{\gamma}^{|\gamma|-1} $.  On se prom\`ene sur le graphe en respectant les r\`egles suivantes.\begin{enumerate}[label = (\roman *)]
\item On suit le sens des fl\`eches.
\item \label{Regle si taille petite} Si \`a la $k$-i\`eme \'etape, on passe d'un mot $\gamma $ \`a un mot $\gamma '$, avec la taille de $\gamma$ strictement plus petite que $\beta - 1$, on ajoute $q^{k-1}\overline{\gamma'}^{1}t $ \`a l'argument. 
\item \label{Regle si taille max} Si \`a la $k$-i\`eme \'etape, on passe d'un mot $\gamma $ \`a un mot $\gamma '$, avec la taille de $\gamma$ \'egale \`a $\beta - 1$, on ajoute $q^{k-1}\overline{\gamma'}^{1}t + \alpha g(\overline{\gamma'}^{1}\cdot\gamma) $ \`a l'argument.
\end{enumerate} 
D\'esormais, nous appelons encodage d'un chemin la valeur $e(x)$, où $x$ est l'argument total du chemin lorsque ce dernier est soumis aux r\`egles ci-dessus.

Soit \`a pr\'esent $\text{Enc}_k(\gamma , \omega)$,  la somme des encodages concernant tous les chemins possibles en $k$ \'etapes reliant $\gamma$ \`a $\omega$. 

\begin{example}
Le graphe suivant correspond au cas $q= 2$, $\beta = 3$.

\begin{center}

\begin{tikzpicture}[->,>=stealth',shorten >=0.5pt,on grid, auto, node distance=2.2cm,
                    semithick]

  \tikzstyle{every state}=[fill=white,draw=none,text=black]

  \node[state] (A)                    {$f_{\epsilon}$};
  \node[state]         (D) [below right of=A] {};
  \node[state]         (M) [right of = D] {$f_1 $};
  \node[state]         (O) [below left of = A] {};
  \node[state]         (N) [left of = O] {$f_0 $};
  \node[rectangle](DB) [below right of = N]{$f_{10} $};
  \node[state](DD)     [below left of = N]{$f_{00} $};
  \node[rectangle](BD) [below left of = M]{$f_{01} $};
  \node[rectangle](BB) [below right of = M]{$f_{11} $};
  
  \path[->]
       (A)       edge  (M)
                 edge  (N)
       (N)       edge  (DB)
                 edge  (DD)
       (M)       edge  (BD)
                 edge  (BB)
       (DD)  edge [loop left]  (DD)
       (BB)  edge [loop right]  (BB);
  
  \draw[->,>=latex] (DD) to[bend right] (DB);
  \draw[->,>=latex] (BD) to[bend right] (DB);
  \draw[->,>=latex] (DB) to[bend right] (BD);
  \draw[->,>=latex] (BD) to[bend right] (DD);
  \draw[->,>=latex] (DB) to[bend right] (BB);
  \draw[->,>=latex] (BB) to[bend right] (BD);

\end{tikzpicture}

\end{center}

Dans ce graphe, qui correspond à $q=2,\beta = 3$, il y a deux manières d'aller du mot $\epsilon$ au mot $00$ en trois étapes : en faisant le chemin $$\epsilon \underset{0}{\rightarrow} 0 \underset{0}{\rightarrow} 00 \underset{0}{\rightarrow} 00 $$ et en faisant le chemin $$\epsilon \underset{1}{\rightarrow} 1 \underset{0}{\rightarrow} 01 \underset{0}{\rightarrow} 00. $$ Ceci nous donne donc :  $$\text{Enc}_3(\epsilon , 00) = e(\alpha g(000)) + e(t+\alpha g(001)).$$ 

\end{example}

Comme $M_l(\alpha,t) $ est la matrice de passage de $V_n$ \`a $V_{qn+l} $, sommer sur $l$ (c'est à dire regarder $M(\alpha,t)$)  revient alors \`a d\'eterminer tous les chemins \`a une \'etape possible. Et comme $\widetilde{M}(\alpha,t) = \ds\prod_{i < \beta}M(\alpha, q^i t), \widetilde{M}(\alpha,t)[i,j]$ correspond à la somme des encodages concernant tous les chemins possibles en $\beta$ \'etapes reliant $\phi (i)$ \`a $\phi(j)$. .

Nous avons donc : \begin{equation}\label{equa thm etape def norme infini}
|\!| \widetilde{M}(\alpha,t) |\!|_{\infty} =  \sup_{i}\,\ds\sum_{j}\left|\text{Enc}_{\beta}(\phi (i) , \phi (j))\right|.
\end{equation}
Cependant $\phi( i)$ et $\phi (j)$ parcourent l'ensemble des mots de taille au plus $\beta - 1$. Ainsi (\ref{equa thm etape def norme infini}) se transforme en : 
\begin{equation}\label{equa thm etape bijection avec mots}
|\!| \widetilde{M}(\alpha,t) |\!|_{\infty} =  \ds\sup_{\gamma \in \M_{\beta \! - \! 1}^*}\,\ds\sum_{\omega \, \in \, \M_{\beta \! - \! 1}^*}\left|\text{Enc}_{\beta}(\gamma , \omega )\right|.
\end{equation}
Cependant, par le Lemme \ref{Lemme sur les descendants}, pour tout $\gamma$, un descendant de $\gamma$ \`a la $\beta$-ieme g\'en\'eration est forc\'ement de taille $\beta - 1$. En effet : la taille du mot va en croissant, strictement si la taille est strictement plus petite que $\beta-1$, et devient constante d\`es que cette taille est atteinte. Or cette taille est atteinte, au pire, au bout de la $\beta-1$ i\`eme \'etape. Donc  (\ref{equa thm etape bijection avec mots}) se transforme en : 
\begin{equation}\label{equa thm etape restriction taille mots}
|\!| \widetilde{M}(\alpha,t) |\!|_{\infty} =  \ds\sup_{\gamma \in \M_{\beta \! - \! 1}^*}\,\ds\sum_{\omega \, \in \, \M_{\beta \! - \! 1}}\left|\text{Enc}_{\beta}(\gamma , \omega )\right|.
\end{equation}
Il reste donc \`a comprendre $\text{Enc}_{\beta}(\gamma , \omega )$. 

\smallskip

Soit $\gamma \in \M_{\beta - 1}^*$. On pose $R \in \M_{\beta - 1 - |\gamma|}$ et $S \in \M_{|\gamma| + 1}$. Les mots $R$ et $S$ interviendront dans le processus pour aller de $\gamma$ \`a $\omega$ et seront d\'etermin\'es ult\'erieurement. Arriver \`a un mot de taille $\beta - 1 $ se fait en $\beta - 1 - |\gamma | $ \'etapes, c'est \`a dire par l'adjonction de $R$. Nous avons donc, en suivant \ref{Regle si taille petite}, le chemin suivant : 
\begin{equation}\label{equa chemin arriver a taille}
\gamma \, \underset{\epsilon_0(R) t }{ \longrightarrow} \, \underline{R}_{1}  \cdot\gamma \rightarrow \quad \ldots \quad \rightarrow \underline{R}_{i-1}\cdot\gamma \, \underset{\epsilon_{i-1}(R)q^{i-1} t  }{ \longrightarrow} \, \underline{R}_{i}\cdot\gamma \rightarrow \quad \ldots \quad \rightarrow  \underline{R}_{|R|-1}\cdot\gamma \, \underset{\epsilon_{|R|-1}(R)q^{|R|-1} t  }{ \longrightarrow} \, R\cdot\gamma  .
\end{equation}

Il nous reste $\beta - (\beta - 1 - |\gamma|) = |\gamma| + 1$ \'etapes \`a parcourir. C'est \`a dire \`a concat\'ener $S$. Cependant comme on a atteint un mot de taille $\beta - 1 $, la fonction de propagation $g$ s'adjoint \`a l'argument (il s'agit de la règle  \ref{Regle si taille max}). Nous avons donc, en suivant \ref{Regle si taille max}, la cha\^ine suivante (on ajoute \`a l'argument ce qui est en bas de la fl\`eche) : 
\begin{align}\label{equa chemin compression}
& R\cdot\gamma \, \underset{\epsilon_0(S)q^{|R|} t + \alpha g(\underline{S}_1\cdot \overline{R \cdot \gamma}^{|R\cdot \gamma|} )  }{\longrightarrow} \, \underline{S}_{1}\cdot \overline{R\cdot\gamma}^{|R\cdot\gamma|-1}
\\ \notag &  \ldots  
\\ \notag  &\underline{S}_{i}\cdot \overline{R\cdot\gamma}^{|R\cdot\gamma|-i}  \, \underset{\epsilon_i(S)q^{i+|R|} t + \alpha g(\underline{S}_{i+1}\cdot \overline{R\cdot\gamma}^{|R\cdot\gamma|-i}  ) }{ \longrightarrow}  \, \underline{S}_{i+1}\cdot \overline{R\cdot\gamma}^{|R\cdot\gamma|-i-1} 
\\ \notag &  \ldots 
\\ \notag &\underline{S}_{|\gamma|}\cdot \overline{R\cdot\gamma}^{|R\cdot \gamma|-|\gamma|}  \, \underset{\epsilon_{|\gamma|}(S)q^{|\gamma|+|R|} t + \alpha g(\underline{S}_{|\gamma|+1}\cdot \overline{R\cdot\gamma}^{|R\cdot \gamma|-|\gamma|} ) }{ \longrightarrow}  \, S\cdot \overline{R\cdot\gamma}^{|R\cdot \gamma|- |\gamma|-1} = S\cdot \overline{R}^{|R|-1} = \omega .
\end{align}

De la derni\`ere ligne on conclut que $S\cdot R = \omega \cdot k$, avec $k$ un mot de taille $1$. Il suit donc que :
\begin{align*}
\underline{S}_{i+1}\cdot \overline{R\cdot\gamma}^{|R\cdot \gamma|-i} & = \underline{S}_{i+1}\cdot R\cdot \overline{\gamma}^{|\gamma|-i}
\\ & = \underline{S\cdot R}_{|R|+i+1}\cdot \overline{\gamma}^{|\gamma|-i} 
\\ &= \underline{\omega\cdot k}_{\beta - (|\gamma|+1)+i+1} \cdot \overline{\gamma}^{|\gamma|-i}
\\ &= \underline{\omega\cdot k}_{|\omega|+1 - (|\gamma|+1)+i+1} \cdot \overline{\gamma}^{|\gamma|-i}
\\ &= \underline{\omega}_{|\omega| - |\gamma|+i}\cdot k \cdot \overline{\gamma}^{|\gamma|-i} .
\end{align*}

Comme $0 \leq i \leq |\gamma|  $, on a 
\begin{align}\label{equa chgmt variable en i}
\{\underline{S}_{i+1}\cdot \overline{R\cdot\gamma}^{|R\cdot \gamma|-i}, \, 0 \leq i \leq |\gamma| \} &= \{\underline{\omega}_{|\omega| - |\gamma|+i}\cdot k \cdot \overline{\gamma}^{|\gamma|-i}, \, 0 \leq i \leq |\gamma|  \} \nonumber\\
&= \{\underline{\omega}_{|\omega| - i}\cdot k \cdot \overline{\gamma}^i, \, 0 \leq i \leq |\gamma| \}.
\end{align} 

En r\'eunissant (\ref{equa chemin arriver a taille}) et (\ref{equa chemin compression}), et en utilisant (\ref{equa chgmt variable en i}), nous obtenons qu'un encodage, suivant le chemin $ S\cdot R$ est \'egal \`a \begin{align*}
e\left(t\left(  \ds\sum_{i = 0}^{|R|-1}\epsilon_i (R)q^i + q^{|R|} \right.\right.& \left.\left.  \ds\sum_{i = 0}^{|S|-1}\epsilon_i(S)q^i\right) +\alpha\ds\sum_{m = 0}^{|\gamma|}g\left(\underline{S}_{m+1}\cdot \overline{R\cdot\gamma}^{|R\cdot \gamma|-m} \right)\right)
\\ & = e\left(  t\varphi (S\cdot R) +\alpha\ds\sum_{m = 0}^{|\gamma|}g\left(\underline{S}_{m+1}\cdot \overline{R\cdot\gamma}^{|R\cdot \gamma|-m}\right)\right)
\\ & = e\left(  t\varphi (\omega\cdot k) +\alpha\ds\sum_{m = 0}^{|\gamma|}g\left(\underline{\omega}_{|\omega| - m}\cdot k \cdot \overline{\gamma}^m\right)\right)
\\ & = e\left(  t\left(\hat{k} + q \varphi (\omega)\right) +\alpha\ds\sum_{m \leq |\gamma|}g\left(\underline{\omega}_{|\omega| - m}\cdot k \cdot \overline{\gamma}^m\right)\right).
\end{align*}  
En utilisant (\ref{equa thm etape restriction taille mots}) et le fait que $k$  prend toutes les valeurs de $\M_1 $, on obtient bien (\ref{equa a prouver thm principal}) . 
\end{proof}

Nous utilisons \`a pr\'esent cette estimation pour obtenir un contr\^ole uniforme en $t$ de la norme infini de $\widetilde{M}(\alpha,t) $.

\begin{corollary}\label{Scolie}
Soient $\omega_1 , \omega_2 \in \M_{\beta - 1} $, tels que $\underline{\omega_1}_{(\beta - 2)} = \underline{\omega_2}_{(\beta - 2)} $ mais $\omega_1 \neq \omega_2 $\footnote{Les mots $\omega_1$ et $\omega_2$ diff\`erent de $\epsilon_{\beta-2} $, par exemple $\omega_1 = 10000000$ et $\omega_2 = 00000000 $.} et $k_1,k_2 \in \M_1 : k_1 \neq k_2$. Alors \begin{equation}\label{equation scolie}
|\!| \widetilde{M}(\alpha,t) |\!|_{\infty} \leq q^{\beta} - 8\left(\sin \ds\frac{\pi |\!| \alpha \left(g(\omega_1\cdot k_1) - g(\omega_1 \cdot k_2) - g(\omega_2 \cdot k_1) + g(\omega_2\cdot k_2) \right) |\!|_{\mathbb{Z}}}{4}\right)^2.
\end{equation}
\end{corollary}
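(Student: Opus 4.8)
The plan is to start from the exact expression for $\lVert\widetilde M(\alpha,t)\rVert_\infty$ furnished by Proposition~\ref{prop equa a prouver thm principal} and to exhibit a genuine deficit coming from the two distinguished words $\omega_1,\omega_2$. Fix $\gamma\in\M_{\beta-1}^*$ and, for $\omega\in\M_{\beta-1}$, set
$$\Phi_\omega(k):=t\hat k+\alpha\sum_{m\le|\gamma|}g\big(\underline{\omega}_{|\omega|-m}\cdot k\cdot\overline{\gamma}^{\,m}\big),\qquad T_\omega:=\sum_{k\in\M_1}e\big(\Phi_\omega(k)\big).$$
Since the factor $e\big(tq\varphi(\omega)\big)$ is independent of $k$, the inner modulus appearing in \eqref{equa a prouver thm principal} is exactly $|T_\omega|$. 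Each $T_\omega$ is a sum of $q$ unit vectors, so $|T_\omega|\le q$; summing this trivial bound over the $q^{\beta-1}$ elements of $\M_{\beta-1}$ merely recovers $q^\beta$, and the whole task is to improve the two summands indexed by $\omega_1$ and $\omega_2$.

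The structural input is the following. Consider the double difference $\Phi_{\omega_1}(k_1)-\Phi_{\omega_1}(k_2)-\Phi_{\omega_2}(k_1)+\Phi_{\omega_2}(k_2)$. The terms $t\hat k$ cancel, so it is independent of $t$. Moreover, for $m\ge1$ the word $\underline{\omega}_{|\omega|-m}\cdot k\cdot\overline{\gamma}^{\,m}$ uses only the suffix of $\omega$ of length $\beta-1-m\le\beta-2$, which by the hypothesis $\underline{\omega_1}_{(\beta-2)}=\underline{\omega_2}_{(\beta-2)}$ is common to $\omega_1$ and $\omega_2$; hence every $m\ge1$ contribution cancels as well, and only the term $m=0$, namely $g(\omega\cdot k)$, survives. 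Consequently the double difference equals
$$\Delta:=\alpha\big(g(\omega_1\cdot k_1)-g(\omega_1\cdot k_2)-g(\omega_2\cdot k_1)+g(\omega_2\cdot k_2)\big),$$
which depends neither on $t$ nor on $\gamma$. I write $\delta:=\lVert\Delta\rVert_{\mathbb Z}$ for its distance to the integers.

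I would then turn the two moduli into a deficit. If $T=\sum_k e(\Phi(k))$ is a sum of unit vectors and $\psi$ denotes its argument (so $T=|T|e(\psi)$), then $q-|T|=\sum_k 2\sin^2\big(\pi(\Phi(k)-\psi)\big)$. Keeping only $k\in\{k_1,k_2\}$ in the two deficits gives
$$2q-|T_{\omega_1}|-|T_{\omega_2}|\ \ge\ 2\big(\sin^2x_1+\sin^2x_2+\sin^2y_1+\sin^2y_2\big),$$
where $x_i=\pi(\Phi_{\omega_1}(k_i)-\psi_1)$ and $y_i=\pi(\Phi_{\omega_2}(k_i)-\psi_2)$, and where by the previous paragraph $(x_1-x_2)-(y_1-y_2)=\pi\Delta$. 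Everything now rests on the elementary inequality: if $(x_1-x_2)-(y_1-y_2)\equiv\pi\delta\pmod\pi$, then
$$\sin^2x_1+\sin^2x_2+\sin^2y_1+\sin^2y_2\ \ge\ 4\sin^2\!\big(\tfrac{\pi\delta}{4}\big).$$
I would prove it by writing $\sin^2\theta=\tfrac12(1-\cos2\theta)$ and maximizing the resulting sum of four cosines under the single linear constraint on their doubled arguments: grouping the variables into the two pairs dictated by the signs in the constraint and applying $\cos u+\cos v=2\cos\tfrac{u+v}2\cos\tfrac{u-v}2$ reduces the problem to maximizing $|\cos P|+|\cos Q|$ with $P-Q=\pi\delta$. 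Using $|a|+|b|=\max(|a+b|,|a-b|)$ one checks this maximum equals $2\cos(\pi\delta/2)$ for $\delta\le\tfrac12$, attained at the symmetric point $P=-Q=\pi\delta/2$; this yields $4\cos(\pi\delta/2)$ for the cosine sum and hence the stated bound via $2-2\cos(\pi\delta/2)=4\sin^2(\pi\delta/4)$. Combining, $|T_{\omega_1}|+|T_{\omega_2}|\le 2q-8\sin^2(\pi\delta/4)$, so
$$\sum_{\omega\in\M_{\beta-1}}|T_\omega|\ \le\ (q^{\beta-1}-2)q+|T_{\omega_1}|+|T_{\omega_2}|\ \le\ q^\beta-8\sin^2\!\big(\tfrac{\pi\delta}{4}\big).$$
As $\delta$ is independent of $\gamma$, this estimate is uniform in $\gamma$ and therefore passes to the supremum in \eqref{equa a prouver thm principal}, which is exactly \eqref{equation scolie}.

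The step I expect to be delicate is the sharp constant in the trigonometric inequality. A cheap chaining through $|\sin a|+|\sin b|\ge|\sin(a-b)|$ only produces a lower bound of order $\tfrac14\sin^2(\pi\delta)$, which is strictly weaker than $8\sin^2(\pi\delta/4)$ for $\delta$ close to $\tfrac12$ and would not suffice; obtaining the correct factor genuinely requires the symmetric optimization carried out above, together with the observation that the minimiser spreads the forced phase discrepancy $\pi\delta$ equally over the four angles, as $\pm\pi\delta/4$.
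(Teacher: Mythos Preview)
Your argument is correct and follows the same skeleton as the paper: start from Proposition~\ref{prop equa a prouver thm principal}, bound all summands trivially except those indexed by $\omega_1,\omega_2$, exploit the hypothesis $\underline{\omega_1}_{(\beta-2)}=\underline{\omega_2}_{(\beta-2)}$ to kill every $m\ge 1$ contribution in the double phase difference, and conclude via a trigonometric inequality. The only difference is in how the trigonometric step is packaged. The paper splits each $|T_{\omega_i}|$ by the triangle inequality into $(q-2)+|e(\Phi_{\omega_i}(k_1))+e(\Phi_{\omega_i}(k_2))|$ and then invokes Lemma~\ref{lemme trigo pour scolie}, which is directly the two-variable bound $|\cos(\pi\alpha')|+|\cos(\pi\xi)|\le 2\cos(\pi\delta/2)$. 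You instead use the exact deficit identity $q-|T|=\sum_k 2\sin^2\!\big(\pi(\Phi(k)-\psi)\big)$ and drop all but the $k_1,k_2$ terms, which introduces the auxiliary arguments $\psi_1,\psi_2$ and leads to a four-variable inequality; after you optimize out the free sums $x_1+x_2$ and $y_1+y_2$, your inequality collapses precisely to the paper's lemma. So the two routes are equivalent, the paper's being slightly more direct in that it never needs the argument $\psi$ of $T_{\omega_i}$.
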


Tout d'abord, pr\'esentons un lemme trigonom\'etrique, dont on peut retrouver la d\'emonstration dans~\cite{RudinShapiro}. Nous rappelons que $|\!|x|\!|_{\Z} $ repr\'esente la distance du r\'eel $x$ au plus proche entier.

\begin{lemma}\label{lemme trigo pour scolie}
Soient $x, x', \xi, \alpha $ des nombres r\'eels. Alors : 
\begin{equation}\label{equation trigo}
\left| e(x+\alpha') + e(x) \right| + \left| e(x' + \xi) + e(x') \right| \leq 4 - 8\left( \sin \ds\frac{\pi |\!| \xi - \alpha' |\!|_{\mathbb{Z}}}{4 }  \right)^2.
\end{equation}
\end{lemma}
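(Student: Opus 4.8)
The plan is to reduce the inequality to a one-line convexity statement. First I would eliminate the variables $x$ and $x'$: since $|e(x)| = |e(x')| = 1$, factoring out these unimodular terms gives $|e(x+\alpha') + e(x)| = |1 + e(\alpha')|$ and $|e(x'+\xi) + e(x')| = |1 + e(\xi)|$, so the left-hand side does not depend on $x$ and $x'$ at all. Next I would use the elementary identity $|1 + e(\theta)| = 2|\cos \pi\theta|$ (obtained by writing $1 + e(\theta) = e(\theta/2)\big(e(-\theta/2) + e(\theta/2)\big)$), together with the fact that $|\cos\pi\theta| = \cos(\pi |\!|\theta|\!|_{\Z})$, which holds because $\cos\pi\theta$ has period $2$, $|\cos\pi\theta|$ has period $1$, and $\pi|\!|\theta|\!|_{\Z} \in [0,\pi/2]$, where the cosine is nonnegative. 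Thus the left-hand side equals $2\cos(\pi A) + 2\cos(\pi B)$, where I set $A = |\!|\alpha'|\!|_{\Z}$ and $B = |\!|\xi|\!|_{\Z}$, both lying in $[0,1/2]$.

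Then I would put both sides into a common form through the half-angle identity $2\cos\phi = 2 - 4\sin^2(\phi/2)$. Writing $D = |\!|\xi - \alpha'|\!|_{\Z} \in [0,1/2]$, the left-hand side becomes $4 - 4\sin^2(\pi A/2) - 4\sin^2(\pi B/2)$, while the right-hand side is $4 - 8\sin^2(\pi D/4)$. Hence the entire lemma is equivalent to the clean inequality
\begin{equation*}
\sin^2\!\frac{\pi A}{2} + \sin^2\!\frac{\pi B}{2} \;\ge\; 2\sin^2\!\frac{\pi D}{4}, \qquad A,B,D \in \left[0,\tfrac12\right].
\end{equation*}

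To finish I would combine two facts. First, subadditivity of the distance to the nearest integer gives $D = |\!|\xi - \alpha'|\!|_{\Z} \le |\!|\xi|\!|_{\Z} + |\!|\alpha'|\!|_{\Z} = A + B$; since $A+B \le 1$ we have $\pi D/4 \le \pi(A+B)/4 \le \pi/4 \le \pi/2$, so monotonicity of $\sin^2$ on $[0,\pi/2]$ yields $\sin^2(\pi D/4) \le \sin^2(\pi(A+B)/4)$. Second, the function $g(t) = \sin^2(\pi t/2) = (1 - \cos\pi t)/2$ satisfies $g''(t) = \tfrac{\pi^2}{2}\cos\pi t \ge 0$ on $[0,1/2]$, hence is convex there, so midpoint convexity gives $g(A) + g(B) \ge 2\,g\!\left(\tfrac{A+B}{2}\right) = 2\sin^2(\pi(A+B)/4)$. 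Chaining these two inequalities reproduces exactly the displayed reduction, which completes the proof.

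There is no serious obstacle here: once the left-hand side has been stripped of $x$ and $x'$ and rewritten through $|\!|\cdot|\!|_{\Z}$, the statement is an immediate consequence of the convexity of $t \mapsto \sin^2(\pi t/2)$ and the triangle inequality for the nearest-integer distance. The only point demanding a little care is the reduction modulo $1$, namely checking that $|\cos\pi\theta| = \cos(\pi|\!|\theta|\!|_{\Z})$ and that the argument $\pi(A+B)/4$ remains in $[0,\pi/2]$ so that $\sin^2$ is monotone there; both are immediate from $A,B \le 1/2$.
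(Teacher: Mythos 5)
Your proof is correct and complete. Note that the paper itself does not prove Lemma \ref{lemme trigo pour scolie}: it defers to Mauduit--Rivat \cite{RudinShapiro}, and your argument is essentially the standard one found there --- factor out the unimodular factors $e(x)$, $e(x')$, reduce via $|1+e(\theta)| = 2|\cos \pi\theta| = 2\cos(\pi |\!|\theta|\!|_{\Z})$, and then combine the triangle inequality for $|\!|\cdot|\!|_{\Z}$ with an elementary trigonometric estimate. The only cosmetic difference is in the final step: where you invoke convexity of $t\mapsto \sin^2(\pi t/2)$ on $[0,1/2]$, one can instead observe that the right-hand side equals $4\cos\bigl(\pi |\!|\xi-\alpha'|\!|_{\Z}/2\bigr)$ (since $4-8\sin^2(\phi/2)=4\cos\phi$) and use the product-to-sum identity $2\cos\pi A+2\cos\pi B=4\cos\bigl(\pi(A+B)/2\bigr)\cos\bigl(\pi(A-B)/2\bigr)\le 4\cos\bigl(\pi(A+B)/2\bigr)$ followed by $|\!|\xi-\alpha'|\!|_{\Z}\le A+B$; this is the same inequality as your midpoint-convexity bound in a different guise. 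Your attention to the ranges ($A,B\le 1/2$, hence $\pi(A+B)/4\le \pi/2$, so that both the monotonicity of $\sin^2$ and the sign of $\cos$ are under control) is precisely what makes these steps legitimate, so there is no gap.
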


\begin{proof}[D\'emonstration du Corollaire \ref{Scolie}]
En majorant trivialement (\ref{equa a prouver thm principal}), dans les cas o\`u on a $\omega \neq \omega_1, \omega_2 $, $k \neq k_1, k_2$, nous obtenons : 
$$
|\!| \widetilde{M}(\alpha,t) |\!|_{\infty} =  \ds\sup_{\gamma \in \M_{\beta \! - \! 1}^*}  q^{\beta} - 4 + \ds\sum_{i = 1,2}\left|\ds\sum_{j = 1,2}e\left(t(\hat{k_j}+q\varphi(\omega_i))+\alpha\left(\ds\sum_{m \leq |\gamma|}g(\underline{\omega_i}_{|\omega_i|-m}\cdot k_j\cdot \overline{\gamma}^m)\right)\right) \right| .
$$
On pose alors \begin{gather}\label{x alpha and co pour lemme trigo}\begin{aligned}
x & = t(\hat{k_1}+q\varphi(\omega_1))+\alpha\ds\sum_{m \leq |\gamma|}g(\underline{\omega_1}_{|\omega_1|-m}\cdot k_1\cdot \overline{\gamma}^m) ,\\ 
\alpha' & = t(\hat{k_2}-\hat{k_1})+\alpha\ds\sum_{m \leq |\gamma|}\left(g(\underline{\omega_1}_{|\omega_1|-m}\cdot k_2\cdot \overline{\gamma}^m)-g(\underline{\omega_1}_{|\omega_1|-m}\cdot k_1\cdot \overline{\gamma}^m)\right) , \\ 
x' &= t(\hat{k_1}+q\varphi(\omega_2))+\alpha\ds\sum_{m \leq |\gamma|}g(\underline{\omega_2}_{|\omega_2|-m}\cdot k_1\cdot \overline{\gamma}^m) \\
 \text{et} \quad \xi & = t(\hat{k_2}-\hat{k_1})+\alpha\ds\sum_{m \leq |\gamma|}\left(g(\underline{\omega_2}_{|\omega_2|-m}\cdot k_2\cdot \overline{\gamma}^m)-g(\underline{\omega_2}_{|\omega_2|-m}\cdot k_1\cdot \overline{\gamma}^m)\right).
\end{aligned}
\end{gather} 
si bien que 
\begin{equation}\label{equa scolie triviale}
|\!| \widetilde{M}(\alpha,t) |\!|_{\infty} =  \ds\sup_{\gamma \in \M_{\beta \! - \! 1}^*} \left( q^{\beta} - 4 + |e(x)+e(\alpha'+x)|+|e(x')+e(x'+\xi)|\right).
\end{equation}
Or
 \begin{align}\label{diff arguments}
\xi - \alpha' =  \alpha\left(\ds\sum_{m \leq |\gamma|} \right. & \left. \left(g(\underline{\omega_2}_{|\omega_2|-m}\cdot k_2\cdot \overline{\gamma}^m)-g(\underline{\omega_2}_{|\omega_2|-m}\cdot k_1\cdot \overline{\gamma}^m)\right) \right.
\\ \notag &  \left. - \ds\sum_{l \leq |\gamma|}\left(g(\underline{\omega_1}_{|\omega_1|-l}\cdot k_2\cdot \overline{\gamma}^l)-g(\underline{\omega_1}_{|\omega_1|-l}\cdot k_1\cdot \overline{\gamma}^l)\right)\right).
\end{align}
Et comme  pour tout $1 \leq m \leq |\omega_1| $, $\underline{\omega_1}_{|\omega_1|-m} = \underline{\omega_2}_{|\omega_1|-m} $, le seul terme non nul dans (\ref{diff arguments}) est $m=0$, et donc :
\begin{equation}\label{equa scolie finale}
\xi - \alpha' = \alpha \left(g(\omega_1\cdot k_1) - g(\omega_1 \cdot k_2) - g(\omega_2 \cdot k_1) + g(\omega_2\cdot k_2) \right),
\end{equation} 
ce qui conclut nôtre preuve.
\end{proof}

\section{Preuve du Th\'eor\`eme \ref{notre gros theoreme}}\label{Section TP}

\smallskip

La preuve du th\'eor\`eme \ref{notre gros theoreme} suit de pr\`es la preuve tr\`es technique du r\'esultat de Mauduit et Rivat dans \cite{RudinShapiro}. 
Nous ne pr\'esentons pas ici toute la preuve, mais seulement les \'el\'ements modifi\'es. Nous conseillons donc au lecteur de suivre notre raisonnement en ayant~\cite{RudinShapiro} sous les yeux.

Classiquement, Mauduit et Rivat utilisent d'abord une identit\'e de Vaughan pour ramener l'estimation de la somme impliquant la fonction de von Mangoldt  \`a l'\'evaluation de sommes de type $I$ ($S_I(\vartheta)$) et de type $II$ ($S_{II}(\vartheta)$). Chacune de ces sommes est ensuite s\'epar\'ee en deux parties. Dans la premi\`ere partie, on a remplac\'e la fonction bas\'ee sur les chiffres qui intervient par une version tronqu\'ee de cette fonction. La troncation permet d'obtenir des fonctions p\'eriodiques et d'utiliser l'analyse de Fourier pour \'evaluer ces premi\`eres sommes. Ces estimations ne sont pas alt\'er\'ees par nos modifications. Dans la seconde partie en revanche, on estime la contribution de l'erreur commise lors du remplacement des fonctions par leurs versions tronqu\'ees. Pour cette seconde partie, Mauduit et Rivat utilisent les propri\'et\'es de petite propagation des fonctions qu'ils consid\`erent. Nous avons introduit \`a la d\'efinition \ref{MR def 1} une propri\'et\'e alternative plus faible qui sera v\'erifi\'ee par les fonctions que nous consid\'erons. L'affaiblissement de cette propri\'et\'e am\`enera des modifications dans l'estimations de ces secondes sommes.
 
Pour cette partie, nous notons $y \sim q^k $ pour $q^{k-1}\leq y < q^k $.

\subsection{Sommes de type I}
Soient $M$ et $N$ des entiers, avec $1 \leq M \leq N$ et $M \leq (MN)^{1/3}$. Nous notons $\mu$ et $\nu$ les entiers tels que $T_q(M)+1=\mu$ et $T_q(N) +1= \nu$.
Soit $f$ une application  v\'erifiant les d\'efinitions \ref{MR def 1} et \ref{propriete fourier mauduit rivat}. 
Soit $\vartheta \in \mathbb{R}$, $I(M,N) \subset [0,MN]$ un intervalle. Nous cherchons \`a estimer $$S_I(\vartheta) := \ds\sum_{M/q \leq m < M}\left|\ds\sum_{n : mn \in I(M,N)}f(mn)e(\vartheta mn ) \right|. $$ 

Comme expliqu\'e pr\'ec\'edemment, la somme $S_I(\theta)$ est s\'epar\'ee en deux parties, nomm\'ees $S_{I,1}'(\vartheta ')$ et $S_{I,2}'(\vartheta ')$ (voir equations $(30),(31)$ et $(35)$ de~\cite{RudinShapiro}). Dans la premi\`ere somme, la fonction $f$ est remplac\'ee par sa fonction tronqu\'ee, la seconde somme prend en compte l'erreur engendr\'ee par cette substitution. L'estimation de la premi\`ere somme reste inchang\'ee et on a donc comme Mauduit et Rivat 
\begin{equation}\label{equation support pour SI}
S_I(\theta) \ll q^{\mu+\nu}(\log q^{\mu+\nu})(S_{I,1}'(\theta')+S_{I,2}'(\theta'))\ll q^{\mu+\nu}(\log q^{\mu+\nu})\left(\mu (\log q)^{3/2}q^{\frac{\rho_1}{2}-\gamma(\frac{\mu + \nu}{3})}+S_{I,2}'(\theta')\right),
\end{equation} 
o\`u $\rho_1 $ est un entier v\'erifiant $1 \leq \rho_1 \leq \mu + \nu - \kappa $ avec $\kappa$ un entier tel que $1 \leq \kappa \leq \frac{\mu + \nu}{3} $, param\`etres que l'on optimisera ult\'erieurement.

Pour estimer $S_{I,2}'(\vartheta ')$, on a comme Mauduit et Rivat
\begin{equation}\label{SI2'}
S_{I,2}'(\vartheta ') \ll \ds\sum_{1 \leq d \leq M}\ds\frac1d\left(\frac{\log{q}}{q^{\mu+\nu}}\sum_{\omega \in \mathcal{W}_{\kappa_d}}2^2\right)^{1/2}\ds,
\end{equation}
o\`u $\kappa_d$ est choisi de sorte que $M^2/d^2 \sim q^{\kappa_d}$,  $\mathcal{W}_{\kappa} = \{u+vq^{\kappa}, (u,v) \in \widetilde{\mathcal{W}_{\kappa}} \}$ et  
$\widetilde{\mathcal{W}_{\kappa}}$ d\'esigne l'ensemble des paires d'entiers $(u,v) \in \{0,\ldots, q^{\kappa}-1\}\times\{0, \ldots, q^{\mu+\nu-\kappa}-1\} $ pour lesquelles $$f(u+vq^{\kappa})\overline{f(vq^{\kappa})} \neq f^{(\kappa + \rho_1)}(u+vq^{\kappa})\overline{f^{(\kappa + \rho_1)}(vq^{\kappa})}.$$ 
Il n'est pas \'etonnnant de voir appara\^{\i}tre ici une somme sur $\mathcal{W}_{\kappa}$ puisque cette partie de la somme mesure l'erreur commise en rempla\c{c}ant une fonction par sa troncature.
C'est dans l'estimation du cardinal de $\widetilde{\mathcal{W}_{\kappa}}$ qu'un changement apparait. En utilisant la d\'efinition  \ref{MR def 1}, nous obtenons 
\begin{equation}\label{nouvelle equa W k}
\text{card} \, \widetilde{\mathcal{W}_{\kappa}} \ll q^{\mu + \nu - \rho_1 + \log \rho_1 },
\end{equation}
alors que Mauduit et Rivat avaient 
$\text{card} \, \widetilde{\mathcal{W}_{\kappa}} \ll q^{\mu + \nu - \rho_1 }.$

Finalement en combinant \eqref{equation support pour SI}, \eqref{SI2'} et \eqref{nouvelle equa W k} avec le choix $\rho_1 = \gamma((\mu+\nu)/3) $, nous obtenons :
\begin{equation}\label{Estim SI nouvelle final}
S_I(\vartheta)\ll (\log q)^{5/2}(\mu+\nu)^2q^{\mu+\nu -\frac{\gamma((\mu+\nu)/3)}{2} + \frac{\log (\gamma((\mu+\nu)/3))}{2}},
\end{equation}
ce qui, en utilisant le fait que $\gamma(\lambda) \leq \lambda/2 $ (\'equation ($26$) dans~\cite{RudinShapiro}), donne
\begin{equation*}
S_I(\vartheta)\ll (\log q)^{5/2}(\mu+\nu)^{2 + \log q}q^{\mu+\nu -\frac{\gamma((\mu+\nu)/3))}{2}}.
\end{equation*}


\subsection{Sommes de type II}

Nous reprenons les notations introduites pour les sommes de type $I$; nous supposons de plus $$\frac{1}{4}(\mu+\nu) \leq \mu \leq \nu \leq \frac{3}{4}(\mu+\nu). $$ Nous introduisons   $a_m \in \mathbb{C}$ et $b_n \in \mathbb{C}$ avec $|a_m|,|b_n| \leq 1$. Les sommes de type II sont définies par $$S_{II}(\vartheta):= \ds\sum_{M/q\leq m < M}\ds\sum_{N/q \leq n < N}a_mb_nf(mn)e(\vartheta mn). $$ 

D\`es le d\'ebut de l'estimation de cette somme dans~\cite{RudinShapiro}, une premi\`ere troncation est introduite. On est alors amen\'e \`a majorer le nombre de paires $(m,n) \in \{q^{\mu-1}, \ldots, q^{\mu}-1 \}\times\{q^{\nu-1}, \ldots,  q^{\nu}-1 \}$ telles qu'il existe $ k < q^{\mu+\rho}$ avec $f(mn+k)\overline{f(mn)} \neq f^{(\mu + 2\rho)}(mn+k)\overline{f^{(\mu + 2\rho)}(mn)} $ o\`u $\rho\leq \mu/7$ est un param\`etre que l'on choisira ult\'erieurement. Avec la faible propri\'et\'e de petite propagation, nous obtenons que ce nombre est un $O\left((\log q )q^{\mu + \nu - \rho + \log \rho}\right)$ au lieu de $O(q^{\mu+\nu-\rho}) $ dans~\cite{RudinShapiro} (Lemma 8 de \cite{RudinShapiro}). Ceci conduit \`a la majoration
\begin{equation}\label{equation SII premiere partie modifiee}
|S_{II}(\vartheta)|^4 \ll q^{4(\mu+\nu)- 2 \rho + 2\log \rho} + q^{3(\mu + \nu - \rho)}\ds\sum_{1 \leq r < q^{\rho}}\ds\sum_{1 \leq s < q^{2\rho}}|S_2'(r,s)|,
\end{equation}
o\`u $S_2'(r,s)$ est une somme faisant intervenir la fonction doublement tronqu\'ee $f^{(\mu_1,\mu_2)}(n) := f^{(\mu_2)}(n)\overline{f^{(\mu_1)}(n)}$ avec $\mu_1=\mu-2\rho$ et $\mu_2=\mu+2\rho$.

Dans la somme $S_2'(r,s)$, Mauduit et Rivat remplacent la fonction $f^{(\mu_1,\mu_2)}(n)$ par la quantit\'e $f^{(\mu_1,\mu_2)}(r_{\mu_0,\mu_2}(n))$, o\`u $\mu_0=\mu-2\rho-2\rho'$ avec $0\leq\rho'\leq \rho$ et $r_{\mu_0,\mu_2}(n)$  est l'entier $u_1$ dans l'\'ecriture unique
$$n = u_2q^{\mu_2}+u_1q^{\mu_0}+u_0,\quad 0\leq u_1<q^{\mu_2-\mu_0}, \quad u_2\geq 0, \quad 0\leq u_0<q^{\mu_0}.$$

L'erreur engendr\'ee par cette substitution est contr\^ol\'ee par le cardinal de $\mathcal{E}_{\mu_0,\mu_1,\mu_2}(r,s)$, l'ensemble des paires $(m,n)$, avec $M/q < m \leq M $ et $N/q < n \leq N $ (o\`u $M \sim q^{\mu}, N \sim q^{\nu}$) pour lesquelles
$$ f^{(\mu_1,\mu_2)}(mn+q^{\mu_1}sn+q^{\mu_1}sr)\neq f^{(\mu_1,\mu_2)}(q^{\mu_0}r_{\mu_0,\mu_2}(mn+q^{\mu_1}sn+q^{\mu_1}sr)).$$

L'estimation de ce cardinal fait appel \`a la propri\'et\'e de petite propagation et en utilisant notre propri\'et\'e plus faible, on obtient 
\begin{equation}\label{nouvelle equation rho prime}
\text{card} \, \mathcal{E}_{\mu_0,\mu_1,\mu_2}(r,s) \ll \max(\tau(q),\log q)(\mu + \nu)^{\omega(q)}q^{\mu+\nu-2\rho'+\log\mu_1},
\end{equation}
ce qui implique 
\begin{align}\label{mes termes d'erreurs reunis}
|S_{II}(\vartheta)|^4 & \ll q^{4(\mu+\nu)- 2 \rho + 2\log \rho} + \max(\tau(q),\log q)(\mu + \nu)^{\omega(q)}q^{4(\mu + \nu)-2\rho'+\log (\mu-2\rho)}
\\\notag  &  \quad + q^{3(\mu + \nu - \rho)}\ds\sum_{1 \leq r < q^{\rho}}\ds\sum_{1 \leq s < q^{2\rho}}|S_3(r,s)|,
\end{align}
o\`u $S_3(r,s)$ est une somme dans laquelle la fonction $f^{(\mu_1,\mu_2)}(r_{\mu_0,\mu_2}(n))$ intervient.

\medskip
De mani\`ere \`a introduire des transform\'ees de Fourier de $f^{(\mu_1,\mu_2)}$, Mauduit et Rivat identifient la d\'ecomposition en base $q$ avec un sous ensemble de l'intervalle $[0,1)$ translat\'e sur l'ensemble des entiers ($r_{\mu_0,\mu_2}(n)= u \Leftrightarrow \frac{n}{q^{\mu_2}} \in [\frac{u}{q^{\mu_2-\mu_1}},\frac{u+1}{q^{\mu_2-\mu_1}}) + \mathbb{Z})$). Ils introduisent alors des fonctions indicatrice d'intervalles qu'ils contr\^olent \`a l'aide des polyn\^omes de Vaaler. Ils trouvent une nouvelle d\'ecomposition de $S_3(r,s)$ constitu\'ee du terme principal des polyn\^omes, qu'ils nomment $S_4(r,s)$ et des termes d'erreurs qui sont contr\^ol\'es par les m\'ethodes usuelles et ne sont pas affect\'es par notre modification. Ainsi, nous pouvons \'ecrire 
\begin{equation}\label{S3 a S4}
S_3(r,s) = S_4(r,s) + O(\max (\log q^{\mu-2(\rho+\rho')},\tau(q^{\mu-2(\rho+\rho')}))q^{\mu+\nu-2\rho}).
\end{equation}

Le fait d'avoir introduit les polyn\^omes de Vaaler permet de travailler sur les transform\'ees de Fourier de $g(n) = f^{(\mu_1,\mu_2)}(q^{\mu_0}n)$, si bien que $S_4(r,s)$ s'\'ecrit : \begin{align*}
S_4(r,s) = q^{2(\mu_2-\mu_0)}&\ds\sum_{|h_0|,|h_1|\leq H}a_{h_0}(q^{\mu_0-\mu_2},H)a_{h_1}(q^{\mu_0-\mu_2},H)\ds\sum_{0\leq h_2,h_3 < q^{\mu_2-\mu_0}}e\left(\frac{h_3sr}{q^{\mu_2-\mu_1}}\right)
\\&\times\hat{g}(h_0-h_2)\overline{\hat{g}(h_3-h_1)\hat{g}(-h_2)}\hat{g}(h_3)
\\ &\times \ds\sum_{m,n}e\left(\ds\frac{(h_0+h_1)mn+h_1mr+(h_2+h_3)q^{\mu_1}sn}{q^{\mu_2}}\right),
\end{align*}
o\`u $a_0(\alpha,H) = \alpha$, $|a_h(\alpha,H)|\leq \min \left(\alpha,\frac{1}{\pi |h|}\right)$, selon le lemme de Vaaler (Lemme $1$ de~\cite{RudinShapiro}). Dans la somme $\sum_{1\leq s<q^{2\rho}}|S_4(r,s)|$, seule l'estimation des termes diagonaux ($h_0+h_1 = 0$) pour les petites valeurs de $|h_1|$ ($|h_1| \leq q^{2\rho} $) sera affect\'ee par nos changements. Pour les autres quantit\'es, l'estimation 
$\sum_{0\leq h<q^{\mu_2-\mu_0}}|\hat{g}(h)|^2=1$ suffit. Ainsi, comme dans \cite{RudinShapiro}, on se ram\`ene \`a une estimation de 
 \begin{equation}\label{S8}
S_8(r) = q^{2(\mu_2-\mu_0)}\ds\sum_{|h_1| \leq q^{2\rho}}|a_{h_1}(q^{\mu_0-\mu_2},H)|^2\min \left(q^{\mu},\ds\frac{q^{\mu_2}}{r|h_1|}\right)\ds\sum_{0 \leq h' < q^{\mu_2-\mu_0}}|\hat{g}(h'-h_1)\hat{g}(h')|^2.
\end{equation}

Le Lemme $11$ de \cite{RudinShapiro} permet ensuite \`a Mauduit et Rivat de majorer la somme des coefficients de Fourier en moyenne. Son analogue dans notre cas est l'estimation suivante :
\begin{lemma}\label{Nouveau lemme 10}
Soient $\mu$ et $\rho$ des entiers tels que $\mu \leq (2+4c/3)\rho $, o\`u $c$ est la constante introduite dans la d\'efinition \ref{propriete fourier mauduit rivat}. Alors, uniform\'ement pour $\lambda$ entier compris entre $(\mu_2-\mu_0)/3 $ et $4(\mu_2-\mu_0)/5 $ et $t$ r\'eel, on a  $$\ds\sum_{0 \leq k < q^{\mu_2-\mu_0-\lambda}}|\hat{g}(k+t)|^2 \ll (\gamma(\lambda)-\mu_1+\mu_0)  \, q^{(\mu_1-\mu_0-\gamma(\lambda))/2}(\log q^{\mu_2-\mu_1})^2 .$$
\end{lemma}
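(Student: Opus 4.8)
Le plan est de suivre d'assez pr\`es la preuve du Lemme~11 de~\cite{RudinShapiro}, en adaptant chaque \'etape \`a la propri\'et\'e de Fourier g\'en\'erale (D\'efinition~\ref{propriete fourier mauduit rivat}) et \`a la faible propri\'et\'e de petite propagation (D\'efinition~\ref{MR def 1}). Posons $J = \mu_2 - \mu_0$, de sorte que $g$ est p\'eriodique de p\'eriode $q^{J}$ et que la transform\'ee $\hat g$ porte sur $\Z/q^{J}\Z$. Je commencerais par remarquer que, sur le domaine fondamental $0 \leq n < q^{J}$, on a $q^{\mu_0}n < q^{\mu_2}$, donc $f^{(\mu_2)}(q^{\mu_0}n) = f(q^{\mu_0}n)$, si bien que
\[ g(n) = f(q^{\mu_0}n)\,\overline{f^{(\mu_1)}(q^{\mu_0}n)}, \]
le second facteur ne d\'ependant que de $n \bmod q^{\mu_1-\mu_0}$, donc p\'eriodique de p\'eriode $q^{\mu_1-\mu_0}$. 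C'est cette factorisation, fonction dilat\'ee fois facteur p\'eriodique, qui gouverne l'apparition de l'exposant $\mu_1-\mu_0$ dans la borne.

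La premi\`ere \'etape technique ram\`ene la masse spectrale sur l'intervalle court \`a une quantit\'e physique. En d\'eveloppant les carr\'es et en sommant la progression g\'eom\'etrique en $k$, on obtient
\[ \sum_{0 \leq k < q^{J-\lambda}}|\hat g(k+t)|^2 = \frac{1}{q^{J}}\sum_{0 \leq h < q^{J}} C(h)\, e(-th/q^{J})\, S(h), \]
o\`u $C(h) = q^{-J}\sum_n g(n)\overline{g(n-h)}$ est l'autocorr\'elation normalis\'ee et $S(h) = \sum_{0 \leq k < q^{J-\lambda}} e(-kh/q^{J})$ un noyau de Dirichlet v\'erifiant $|S(h)| \ll \min(q^{J-\lambda},\, 1/|\!| h/q^{J} |\!|_{\Z})$. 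De fa\c{c}on \'equivalente, cela revient \`a estimer la norme $L^2$ de la moyenne de $g$ (modul\'ee par le caract\`ere $n \mapsto e(-tn/q^{J})$) sur des fen\^etres de longueur $q^{\lambda}$~: le passage aux basses fr\'equences correspond exactement \`a un moyennage local sur une fen\^etre de taille $q^{\lambda}$. Le terme diagonal $h=0$ fournit la contribution principale inoffensive $q^{-\lambda}$, et tout l'effort porte sur les d\'ecalages $h \neq 0$ de taille $\ll q^{\lambda}$ s\'electionn\'es par le noyau.

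Pour ces moyennes sur fen\^etres, je d\'evelopperais le facteur p\'eriodique $\overline{f^{(\mu_1)}(q^{\mu_0}\cdot)}$ en s\'erie de Fourier sur $\Z/q^{\mu_1-\mu_0}\Z$~; ses coefficients $(c_a)$ v\'erifient $\sum_a |c_a|^2 = 1$. Chaque fen\^etre se r\'e\'ecrit alors comme une combinaison, pond\'er\'ee par les $c_a$, de sommes du type $q^{-\lambda}\sum_{0 \leq j < q^{\lambda}} f(q^{\mu_0}(n_*+j))\, e(js)$. Le d\'ecalage additif $q^{\mu_0}n_*$ \`a l'int\'erieur de $f$ est absorb\'e gr\^ace \`a la D\'efinition~\ref{MR def 1}, qui permet de remplacer $f$ par une troncation~: la somme devient p\'eriodique et prend la forme exacte requise par la D\'efinition~\ref{propriete fourier mauduit rivat}. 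On applique alors la propri\'et\'e de Fourier \`a l'\'echelle $\lambda$, qui majore chacune de ces sommes par $q^{-\gamma(\lambda)}$. L'hypoth\`ese est ici d\'ecisive~: pour que la D\'efinition~\ref{propriete fourier mauduit rivat} s'applique il faut $\mu_0 \leq c\lambda$, et c'est pr\'ecis\'ement ce que garantissent $\mu \leq (2+4c/3)\rho$ et $\lambda \geq (\mu_2-\mu_0)/3$, puisqu'alors $\mu_0 \leq (4c/3)\rho \leq c\lambda$ (car $\lambda \geq (\mu_2-\mu_0)/3 \geq 4\rho/3$). Le facteur $q^{(\mu_1-\mu_0)/2}$, avec sa racine carr\'ee, appara\^it comme la moyenne g\'eom\'etrique entre le d\'ecompte trivial $q^{\mu_1-\mu_0}$ des modes $a$ et la d\'ecroissance $q^{-\gamma(\lambda)}$, obtenue par Cauchy--Schwarz \`a partir de la normalisation $\sum_a|c_a|^2=1$~; le facteur lin\'eaire $\gamma(\lambda)-\mu_1+\mu_0$ et la puissance $(\log q^{\mu_2-\mu_1})^2$ proviennent des sommations (de type noyau de Dirichlet et polyn\^omes de Vaaler) effectu\'ees sur les plages de fr\'equences et de d\'ecalages pertinentes.

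Le point le plus d\'elicat sera le contr\^ole des termes crois\'es ($a \neq a'$) dans le d\'eveloppement du carr\'e de la moyenne fen\^etr\'ee~: une application na\"ive de Cauchy--Schwarz y perd le facteur $(\sum_a|c_a|)^2 = q^{\mu_1-\mu_0}$ au lieu du $q^{(\mu_1-\mu_0)/2}$ souhait\'e, et seule l'orthogonalit\'e r\'esiduelle en $n$ --- \`a nouveau contr\^ol\'ee par les corr\'elations de $f(q^{\mu_0}\cdot)$ et la propri\'et\'e de Fourier, comme dans~\cite{RudinShapiro} --- permet de r\'ecup\'erer la racine carr\'ee. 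La seconde difficult\'e, propre \`a notre cadre, est de propager \`a travers ces estimations la version affaiblie de la petite propagation (cardinaux exceptionnels en $q^{\lambda-\rho+\log\rho}$ et non $q^{\lambda-\rho}$), ce qui, en fin de compte, alt\`ere les constantes par rapport \`a~\cite{RudinShapiro}. Enfin, la borne sup\'erieure $\lambda \leq 4(\mu_2-\mu_0)/5$ assure que la fen\^etre $q^{J-\lambda}$ reste de taille substantielle, donc que les sommations de noyau restent finies et que le domaine de sommation n'est pas trivial, tandis que l'uniformit\'e en $t$ est gratuite, ce r\'eel n'intervenant que par modulation par un caract\`ere tout au long de l'argument.
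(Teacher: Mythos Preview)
Your route diverges from the paper's in a way that creates a difficulty the paper never faces. The paper does \emph{not} pass to autocorrelations via a Dirichlet kernel, nor does it Fourier-expand the periodic factor $\overline{f^{(\mu_1)}(q^{\mu_0}\,\cdot\,)}$. Instead it decomposes $\hat g(t)$ itself, writing $n = u + v q^{\lambda}$ with $0\le u<q^{\lambda}$, $0\le v<q^{\mu_2-\mu_0-\lambda}$, and then uses the key observation that $\lambda \ge \mu_1-\mu_0$ on the given range (since $\rho'\le\rho$), so that $f^{(\mu_1)}(q^{\mu_0}n)=f^{(\mu_1)}(q^{\mu_0}u)$ depends on $u$ alone. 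After inserting the harmless factor $f(vq^{\mu_0+\lambda})\overline{f(vq^{\mu_0+\lambda})}=1$, this yields the exact factorisation displayed in the paper, with an inner sum over $u$ carrying $f(uq^{\mu_0}+vq^{\mu_0+\lambda})\overline{f(vq^{\mu_0+\lambda})}\,\overline{f^{(\mu_1)}(uq^{\mu_0})}$. One then introduces a truncation parameter $\rho_3$, replaces the product $f(\cdot)\overline{f(\cdot)}$ by its $(\mu_0+\lambda+\rho_3)$-truncated version, and bounds the resulting exceptional set $\mathcal W_{\lambda}$ by the weak carry property, obtaining $|\mathcal W_{\lambda}|\ll q^{\mu_2-\mu_0-\rho_3+\log\rho_3}$; the rest is Mauduit--Rivat's argument verbatim, optimised in $\rho_3$.

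Because the periodic factor drops out algebraically, there are no cross terms $a\neq a'$ to control at all, and your ``point le plus d\'elicat'' simply does not arise. Your outline, by contrast, introduces $q^{\mu_1-\mu_0}$ Fourier modes and then must recover a square root via an ``orthogonalit\'e r\'esiduelle en $n$'' that you do not make precise; this is where the proposal has a genuine gap. It is not clear that your Cauchy--Schwarz step, as sketched, actually yields $q^{(\mu_1-\mu_0)/2}$ rather than $q^{\mu_1-\mu_0}$, and your attribution of the linear factor $\gamma(\lambda)-\mu_1+\mu_0$ and of $(\log q^{\mu_2-\mu_1})^2$ to ``sommations de type noyau de Dirichlet et polyn\^omes de Vaaler'' is too vague to verify. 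Your verification that $\mu\le(2+4c/3)\rho$ ensures $\mu_0\le c\lambda$ is correct and is indeed the reason the Fourier property applies at scale $\lambda$, but I would recommend abandoning the Fourier expansion of $\overline{f^{(\mu_1)}}$ and instead exploiting directly that $\lambda\ge\mu_1-\mu_0$ splits it off.
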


Ce lemme permet de dire que $$\ds\sum_{|h_1|\leq q^{2\rho}}\sum_{0 \leq h' < q^{\mu_2-\mu_0}}|\hat{g}(h'-h_1)\hat{g}(h')|^2 \ll (\gamma(\mu_2-\mu_0-2\rho)-\mu_1+\mu_0)  \, q^{-\frac{\gamma(\mu_2-\mu_0-2\rho)-\mu_1+\mu_0}{2}}  $$ ce qui conduit \`a  
$$\ds\frac{1}{q^{2\rho}}\ds\sum_{1 \leq r < q^{2\rho}}S_8(r) \ll (\gamma(\mu_2-\mu_0-2\rho)-\mu_1+\mu_0)  \, q^{\mu-\frac{\gamma(\mu_2-\mu_0-2\rho)-\mu_1+\mu_0}{2}}+q^{\mu-\rho}\log q^{\rho},  $$
 puis \`a
 \begin{align}\label{S4'}
 \ds\frac{1}{q^{3\rho}} & \ds\sum_{1 \leq r < q^\rho}\ds\sum_{1 \leq s < q^{2\rho}} |S_4(r,s)|\ll (\log q)^3(\mu+\nu)^3q^{\mu+\nu+3(\mu_2-\mu_0)+2\rho}(q^{-\nu}+q^{-\mu_2})
 \\\notag & + q^{\mu+\nu+\mu_1-\mu_0}\Big(\big(\gamma(\mu_2-\mu_0-2\rho)-\mu_1+\mu_0\big)  \, q^{-\frac{\gamma(\mu_2-\mu_0-2\rho)-\mu_1+\mu_0}{2}}+q^{-\rho}\log q^{\rho}\Big)
 \\ \notag & \quad \quad \quad (\tau(q^{\mu_2-\mu_1})+q^{\mu_2-\mu_1-\nu}\log q^{\mu_2-\mu_1}). 
 \end{align}

\bigskip

Attelons-nous \`a  d\'emontrer le Lemme \ref{Nouveau lemme 10}.

\begin{proof}[Preuve du Lemme \ref{Nouveau lemme 10} ]
On a par hypothèse $\mu_1-\mu_0 \leq \lambda \leq \mu_2 - \mu_0$, donc, en s\'eparant la somme d\'efinissant  $\hat{g}(t)$ selon les restes de la division euclidienne par $q^\lambda$, on obtient \begin{align*}
 \hat{g}(t)&
= \ds\frac{1}{q^{\mu_2-\mu_0-\lambda}}\ds\sum_{0 \leq v <q^{\mu_2-\mu_0-\lambda}}f(vq^{\mu_0+\lambda})e\left(-\ds\frac{vt}{q^{\mu_2-\mu_0-\lambda}}\right)
\\ & \quad \quad \times\ds\frac{1}{q^{\lambda}}\ds\sum_{0 \leq u < q^{\lambda}}f(uq^{\mu_0}+vq^{\mu_0+\lambda})\overline{f(vq^{\mu_0+\lambda}) f^{(\mu_1)}(uq^{\mu_0})}e\left(-\ds\frac{ut}{q^{\mu_2-\mu_0}}\right).
\end{align*}

Dans la ligne du bas, Mauduit et Rivat remplacent  $f$ par la fonction tronqu\'ee $f^{(\mu_0+\lambda+\rho_3)}$ associ\'ee, o\`u $\rho_3$ est un param\`etre qui sera optimis\'e. \`A nouveau, le traitement de la partie correspondant \`a la fonction tronqu\'ee est inchang\'e. Dans le traitement de l'erreur en revanche, on est amen\'e \`a majorer le cardinal de l'ensemble $\mathcal{W}_{\lambda}$ des entiers $w = u + vq^{\lambda}$ tels que $$f(uq^{\mu_0}+vq^{\mu_0+\lambda})\overline{f(vq^{\mu_0+\lambda})} \neq f^{(\mu_0+\lambda+\rho_3)}(uq^{\mu_0}+vq^{\mu_0+\lambda})\overline{f^{(\mu_0+\lambda+\rho_3)}(vq^{\mu_0+\lambda})}.$$ La faible propri\'et\'e de petite propagation (\ref{petite propagation}) donne alors $|\mathcal{W}_{\lambda}| \ll q^{\mu_2-\mu_0-\rho_3+\log(\rho_3)} $ et permet de d\'emontrer le lemme.
\end{proof}
\bigskip

Nous n'avons plus qu'\`a r\'eunir les \'equations \eqref{mes termes d'erreurs reunis}, \eqref{S3 a S4} et \eqref{S4'} et \`a utiliser $\mu_2 = \mu+2\rho$, $\mu_1 = \mu-2\rho$ et $\mu_0 = \mu_1-2\rho'$ pour obtenir :
\begin{align}
 \notag |S_{II}(\vartheta)|^4 &  \ll \max (\log (q^{\mu-2(\rho+\rho')}),\tau (q^{\mu-2(\rho+\rho')}))q^{4(\mu+\nu)-2\rho+2\log \rho}
\\ \label{grosse equa SII vert} & \quad + \max(\tau(q),\log q)(\mu+\nu)^{\omega(q)}q^{4(\mu+\nu)-2\rho'+\log (\mu-2\rho)}
\\ \label{grosse equa SII bleu} & \quad +  q^{3(\mu+\nu)}(\log q)^3(\mu+\nu)^3q^{\mu+\nu+3(2(\rho+\rho'))+2\rho}(q^{-\nu}+q^{-\mu_2})
\\ \label{grosse equa SII rose} & \quad + q^{3\mu+3\nu} \Big[ q^{\mu+\nu+2\rho'}(\gamma(2\rho+2\rho')-2\rho')  \, q^{-\frac{\gamma(2\rho+2\rho')-2\rho'}{2}}+q^{-\rho}\log q^{\rho})\Big.
\\ \notag & \quad \quad \quad \Big.(\tau(q^{4\rho})+q^{4\rho-\nu}\log q^{4\rho}) \Big] 
\end{align}

En utilisant $\gamma(x) \leq x/2$ (($26$) de~\cite{RudinShapiro}) et $\nu\geq 6\rho$, on obtient
\begin{equation*}
\eqref{grosse equa SII rose} \ll q^{4(\mu+\nu)+3/2(\mu_1-\mu_0)-\gamma(2\rho)}\log (q^{\rho})\tau(q)(\mu_2-\mu_1)^{\omega(q)}.
\end{equation*}

Notre estimation de $|S_{II}|^4$ devient de la m\^eme forme que celle trouv\'ee dans~\cite{RudinShapiro}, on peut alors d\'eduire, en faisant les m\^emes choix qu'eux :
\begin{equation}\label{controle de SII}
|S_{II}(\vartheta)|^4 \ll \max (\tau(q)\log q, (\log q)^3)(\mu+\nu)^{2+\log q+\max (\omega(q),2)}q^{4\mu+4\nu-\gamma(2\lfloor \mu / 15 \rfloor)}.
\end{equation}
 
\medskip

En rappelant $q^{\mu+\nu-1} \leq x < q^{\mu + \nu} $, en utilisant le Lemme $1$ de~\cite{MR:gelfond} avec les estimations (\ref{Estim SI nouvelle final}) et (\ref{controle de SII}), nous obtenons : 

\begin{equation*}
\left| \ds\sum_{x/q < n \leq x}\Lambda(n)f(n)e(\vartheta n) \right| \ll \left( \log x \right)^2 \left(S_I + S_{II} \right),
\end{equation*}

or par l'estimation (\ref{Estim SI nouvelle final}) 

\begin{equation*}
S_I(\vartheta)\ll (\log q)^{5/2}(\mu+\nu)^{2 + \log q}q^{\mu+\nu -\frac{\gamma((\mu+\nu)/3))}{2}}
\end{equation*}

et par l'estimation (\ref{controle de SII}) 

\begin{equation*}
|S_{II}(\vartheta)| \ll \max (\tau(q)\log q, (\log q)^3)^{1/4}(\mu+\nu)^{1/2+\log q/4+\max (\omega(q),2)/4}q^{\mu+\nu-\gamma(2\lfloor \mu / 15 \rfloor)/20},
\end{equation*}
 nous pouvons conclure la preuve de ce th\'eor\`eme comme le font Mauduit et Rivat.

\section{Applications}\label{Section : applis}

Dans cette partie nous appliquons les r\'esultats des parties~\ref{section petite propa} et \ref{section genealogie}  pour obtenir une large classe de fonctions qui v\'erifient un th\'eor\`eme des nombres premiers.

Nous avons vu que si une fonction v\'erifiait la faible propri\'et\'e de  petite propagation et la propriété de Fourier (\ref{equa a verifier pour MR}), alors elle v\'erifiait la majoration (\ref{nouvelle equation von mangoldt}). De plus, nous avons vu dans la partie~\ref{section petite propa} que les fonctions associ\'ees aux suites $\beta$-r\'ecursives v\'erifiaient la faible propriété de  petite propagation (Proposition \ref{Prop petite propa suite type RS}). Enfin, nous avons vu dans la partie~\ref{section genealogie}, que, pour une fonction associ\'ee \`a une suite $\beta$-r\'ecursive, v\'erifier la propriété de Fourier  (\ref{equa a verifier pour MR}) revenait \`a trouver $\alpha $ r\'eel et $\omega_1$, $\omega_2$ de taille $\beta - 1$ de m\^eme suffixes, et $k_1$ et $k_2$ de telle sorte que 
\begin{equation}\label{equa pour tout appliquer}
 \alpha \left(g(\omega_1\cdot k_1) - g(\omega_1 \cdot k_2) - g(\omega_2 \cdot k_1) + g(\omega_2\cdot k_2)\right) \notin \Z.
\end{equation}

Nous notons $K = K(g,\omega_1, \omega_2, k_1, k_2) = g(\omega_1\cdot k_1) - g(\omega_1 \cdot k_2) - g(\omega_2 \cdot k_1) + g(\omega_2\cdot k_2) $.

Nous allons ici donner des exemples de suites $\beta$-r\'ecursives, et montrer que pour certains $k_1, k_2, \omega_1, \omega_2 $ leurs fonctions de propagations v\'erifient (\ref{equa pour tout appliquer}) si et seulement si $\alpha$ n'est pas un entier. Il y a deux grandes classes de fonctions, que nous traitons s\'epar\'ement :

\subsection{Nombre d'occurences}

\begin{proposition}\label{proposition nombre d'occurence}
Soit $k \geq 2$, et soit $B \subset \M_k $  tel que  $ \exists \, \omega \in B$  de sorte que
 \begin{equation}\label{condition sur B a gauche}
\exists \, l_1  : l_1 \cdot \underline{\omega}_{(k-1)} \notin B,
\end{equation}

\begin{equation}\label{condition sur B a droite}
\exists \, l_2  :  \overline{\omega}^{(k-1)}\cdot l_2 \notin B,
\end{equation} 
et
\begin{equation}\label{condition sur B au milieu}
l_1\cdot \epsilon_{k-2}(\omega)\ldots\cdot\epsilon_1(\omega)\cdot l_2 \notin B.
\end{equation}
Soit $(a(n))_{n \in \N}$ une suite $k$-r\'ecursive de fonction de propagation $g = \ds\sum_{\chi \in B}\mathds{1}_{\chi}.$ Alors $K = 1$ et $(a_n)_{n \in \N}$ vérifie un théorème des nombres premiers.
\end{proposition}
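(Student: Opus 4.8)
The plan is to exhibit one explicit quadruple $(\omega_1,\omega_2,k_1,k_2)$ for which $K=1$, and then to feed this into the machinery already assembled in Sections \ref{section petite propa} and \ref{section genealogie}. Since $g=\sum_{\chi\in B}\mathds{1}_\chi$, for every word $\chi\in\M_k$ we simply have $g(\chi)=1$ when $\chi\in B$ and $g(\chi)=0$ otherwise, so $K=g(\omega_1\cdot k_1)-g(\omega_1\cdot k_2)-g(\omega_2\cdot k_1)+g(\omega_2\cdot k_2)$ is nothing but a signed count of membership in $B$ of four words of length $k$.

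Let $\omega\in B$ be the word furnished by the hypothesis, together with the associated letters $l_1,l_2$. First I would set
$$\omega_1=\overline{\omega}^{(k-1)},\qquad \omega_2=l_1\cdot\epsilon_{k-2}(\omega)\cdots\epsilon_1(\omega),\qquad k_1=\epsilon_0(\omega),\qquad k_2=l_2.$$
Both $\omega_1,\omega_2$ lie in $\M_{k-1}$ and share the suffix $\epsilon_{k-2}(\omega)\cdots\epsilon_1(\omega)$ of length $k-2$, so the compatibility requirement $\underline{\omega_1}_{(k-2)}=\underline{\omega_2}_{(k-2)}$ of Corollary \ref{Scolie} is met. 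The two non-degeneracy conditions $\omega_1\neq\omega_2$ and $k_1\neq k_2$ follow from (\ref{condition sur B a gauche}) and (\ref{condition sur B a droite}): if one had $l_1=\epsilon_{k-1}(\omega)$ then $l_1\cdot\underline{\omega}_{(k-1)}=\omega\in B$, and if $l_2=\epsilon_0(\omega)$ then $\overline{\omega}^{(k-1)}\cdot l_2=\omega\in B$, each contradicting the respective hypothesis.

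The evaluation of $K$ is then immediate from the four concatenations: $\omega_1\cdot k_1=\omega\in B$, whereas $\omega_1\cdot k_2=\overline{\omega}^{(k-1)}\cdot l_2$, $\omega_2\cdot k_1=l_1\cdot\underline{\omega}_{(k-1)}$ and $\omega_2\cdot k_2=l_1\cdot\epsilon_{k-2}(\omega)\cdots\epsilon_1(\omega)\cdot l_2$ are exactly the three words excluded from $B$ by (\ref{condition sur B a droite}), (\ref{condition sur B a gauche}) and (\ref{condition sur B au milieu}) respectively. Hence $K=1-0-0+0=1$.

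To conclude, I would invoke Corollary \ref{Scolie} with $\alpha=j'/m'$, $1\leq j'<m'$: because $K=1$, the quantity $|\!|\alpha K|\!|_{\Z}=|\!|j'/m'|\!|_{\Z}$ is a fixed positive number, so (\ref{equation scolie}) furnishes a bound $|\!|\widetilde{M}(\alpha,t)|\!|_{\infty}\leq q^{\beta}-\delta$ with $\delta>0$ uniform in $t$. Inserting this into Proposition \ref{prop propagation vecteur} makes the product telescope to $(1-\delta q^{-\beta})^{\lfloor N/\beta\rfloor}$, which decays like $q^{-cN}$ and thus yields the Fourier property (\ref{equa a verifier pour MR}) with a $\gamma$ growing linearly. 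Combined with the weak small-propagation property (Proposition \ref{Prop petite propa suite type RS}), Theorem \ref{notre gros theoreme} and its consequences (notably Corollary \ref{coro2}) then deliver the prime number theorem. The one genuinely delicate point is the bookkeeping of the first step: aligning the three hypotheses on $B$ with precisely the membership pattern ($\in,\notin,\notin,\notin$) of the four words so that the alternating sum collapses to $1$; everything that follows is a direct appeal to the results already established.
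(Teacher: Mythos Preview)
Your proof is correct and follows exactly the same route as the paper: the identical choice $\omega_1=\overline{\omega}^{(k-1)}$, $\omega_2=l_1\cdot\epsilon_{k-2}(\omega)\cdots\epsilon_1(\omega)$, $k_1=\epsilon_0(\omega)$, $k_2=l_2$ is made, and the three hypotheses on $B$ are read off as $g(\omega_1\cdot k_2)=g(\omega_2\cdot k_1)=g(\omega_2\cdot k_2)=0$ while $g(\omega_1\cdot k_1)=g(\omega)=1$. You actually add details the paper omits (the verification that $\omega_1\neq\omega_2$ and $k_1\neq k_2$, and the explicit chain through Corollary~\ref{Scolie}, Proposition~\ref{prop propagation vecteur}, and Theorem~\ref{notre gros theoreme}), but the argument is the same.
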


\begin{proof}
En choisissant $\omega_1 = \overline{\omega}^{(k-1)}$, $ \omega_2 = l_1 \cdot \epsilon_{k-2}(\omega)\ldots \cdot \epsilon_1(\omega)$, $k_1 = \epsilon_0 (\omega)$, $k_2 = l_2 $ , on a $\omega_1 \cdot k_1 = \omega$ et donc :
\begin{equation*}
K = g(\omega) - g(\overline{\omega}^{(k-1)}\cdot l_2 ) - g(l_1 \cdot \underline{\omega}_{(k-1)}) + g(l_1\cdot \epsilon_{k-2}(\omega)\ldots\cdot\epsilon_1(\omega)\cdot l_2) = 1.
\end{equation*}

\end{proof}

De ce r\'esultat \textit{a priori}  tautologique, on tire de nombreuses cons\'equences. Le fait que $K$ soit \'egal \`a $1$ implique que (\ref{equa pour tout appliquer}) est \'equivalente \`a $\alpha$ non entier, ou encore que les suites $\beta$-r\'ecursives ayant une fonction de propagation correspondant aux conditions de la Proposition \ref{proposition nombre d'occurence} v\'erifient la propriété de Fourier (\ref{equa a verifier pour MR}) si et seulement si $\alpha$ n'est pas un entier.

Or ce type de fonction recouvre de nombreux cas classiques. Par exemple : 
\begin{enumerate}[label = (\Roman *)]
\item Si on prend $B = \{ \omega \}$, alors il est clair qu'il existe $\omega \in B $ v\'erifiant (\ref{condition sur B a gauche}), (\ref{condition sur B a droite}) et (\ref{condition sur B au milieu}), et si on pose $a(k) = 0  $ pour tout $k < q^{|\omega|} $, on trouve les suites qui comptent le nombre d'occurrences d'un mot quelconque de taille supérieure ou égale à $2$.
\item Soit $k \geq 1 $. Si on prend $B = \{a\cdot\gamma\cdot b , \gamma \in \M_k \} $, on trouve alors que $g = \mathds{1}_{a \cdot z \cdot b}$, et donc le nombre d'occurrences des mots de la forme $aZb$ o\`u $Z$ est un mot arbitraire. En particulier $q = 2, a = 1, b = 1$ donne la suite introduite par Allouche et Liardet dans~\cite{AlloucheLiardet}. On peut l'am\'eliorer de sorte \`a assigner des lettres fixes entre les deux extremit\'es en posant $B = \{a_0\cdot \gamma_0\cdot a_1 \cdots \gamma_k \cdot a_{k+1}, \,  \gamma_i \in \M_{\zeta (i)} \, \forall 0 \leq i \leq k \}$, où $\zeta$ est une fonction de $\N$ dans $\N$ arbitraire.
\item Si on suppose qu'on n'est pas dans le cas $q=\beta=2$, on peut prendre $B = \ds\bigcup_{a \in \M_1}\{a\cdot\ldots\cdot a\} $ pour compter le nombre d'occurence des mots de m\^eme taille et ayant une seule lettre, comme $000$, $111$ et $222$ pour $q = 3$ et $k = 3$ .
\end{enumerate}

Notre condition \eqref{equa pour tout appliquer} permet de traiter de nombreux cas classiques. En revanche la suite $(a(n))_{n \in \N}$ qui compte le nombre de mots $00$ et $11$ dans l'écriture de $n$ en base $2$ n'entre pas dans ce cadre. En effet, sous ces conditions, la fonction de propagation $g(a\cdot b)$ vaut $1$ si et seulement si $a = b= 0$ ou $a=b=1$ et alors quels que soient $\omega_1$, $\omega_2$ de taille $1$ et $k_1,k_2$ de taille $1$, on a \begin{align*}
|K| & = |g(\omega_1\cdot k_1) - g(\omega_2\cdot k_1) - g(\omega_1\cdot k_2) + g(\omega_2\cdot k_2)|
\\ &= |g(00)-g(01)-g(10)+g(11)|
\\ &= 2 \equiv 0 \bmod 2,
\end{align*} et $\alpha = 1/2$ est également une valeur proscrite.

\subsection{Polyn\^omes sur les chiffres}

Dans cette sous-partie, nous r\'esolvons partiellement la question pos\'ee par Kalai~\cite{Kalai2} à travers la démonstration du Théorème \ref{Thm Kalai}.

\begin{proof}[Demonstration du Th\'eor\`eme \ref{Thm Kalai}]
Soit $(x_{k-1},\ldots,x_1)\in\{0,1,\ldots,q-1\}^{k-1}$ tel que $$P_1(1,x_{k-1},\ldots,x_1,1)=1.$$ On pose $g(\omega)=P(\epsilon_{i+k}(n),\ldots,\epsilon_i(n))$ et $\omega_1 = 1\cdot x_{k-1} \cdot x_{k-2}\cdots x_1 $, $\omega_2 = 0\cdot x_{k-1}\cdot x_{k-2} \cdots x_1 $, $k_1 = 1 $ et $k_2 = 0 $. Alors 
\begin{align*}
  K&=g(1\cdot x_{k-1}\cdot x_{k-2}\cdots x_1 \cdot 1)-g(1\cdot x_{k-1}\cdot x_{k-2}\cdots x_1 \cdot 0)\\
	&\quad -g(0\cdot x_{k-1}\cdot x_{k-2}\cdots x_1 \cdot 1)+g(0\cdot x_{k-1}\cdot x_{k-2}\cdots x_1 \cdot 0)\\
	&=P_1(1,x_{k-1},\ldots,x_1,1)=1,
\end{align*}
et donc $\eqref{equa pour tout appliquer}\Leftrightarrow \alpha \notin \Z$. Par la Remarque \ref{Remarque gros thm}, le Théorème \ref{Thm Kalai} est démontré. 
\end{proof}

Le fait qu'on ne puisse pas traiter le cas $P(X_1,X_0) = X_1+X_0$ vient du fait que dans ce cas, quels que soient $x_0, x_0',x_1,x_1'$ \begin{align*}
|K| &= |P(x_1,x_0)-P(x_1',x_0)-P(x_1,x_0')+P(x_1',x_0')| 
\\ &= |x_1+x_0-x_1'-x_0-x_1-x_0'+x_1'+x_0'|
\\ &= 0
\end{align*} et cette remarque vaut pour tout polynôme $P(X_k,\ldots,X_0)$ bilinéaire en $X_k$ et $X_0$.

\end{document}